\newtheorem{theorem}{Theorem}
\newtheorem{lem}[theorem]{Lemma}
\newtheorem{prop}[theorem]{Proposition}
\newtheorem*{theorem*}{Claim} 
\newtheorem*{theorem**}{Theorem}
\theoremstyle{definition}
\theoremstyle{remark}
\newtheorem{remark}[theorem]{Remark} 
\newtheorem*{remark*}{Remark}
\numberwithin{equation}{section}
\numberwithin{theorem}{section} 
\begin{document}

\title[The solution map of the Euler equations]
{Continuity of the solution map of the Euler equations in H\"older spaces and weak norm inflation in Besov spaces} 

\author{Gerard Misio{\l}ek}
\address{Department of Mathematics, University of Notre Dame, IN 46556, USA} 
\email{gmisiole@nd.edu} 

\author{Tsuyoshi Yoneda}
\address{Graduate School of Mathematical Sciences, University of Tokyo, Komaba 3-8-1 Meguro, Tokyo 153-8914, Japan} 
\email{yoneda@ms.u-tokyo.ac.jp}

\subjclass[2000]{Primary 35Q35; Secondary 35B30}

\date{\today} 


\keywords{Euler equations, H\"older spaces, solution map} 

\begin{abstract} 
We construct an example showing that the solution map of the Euler equations is not continuous 
in the H\"older space from $C^{1,\alpha}$ to $L^\infty_tC^{1,\alpha}_x$ for any $0<\alpha<1$. 
On the other hand we show that it is continuous when restricted to the little H\"older subspace $c^{1,\alpha}$. 
We apply the latter to prove 
an ill-posedness result for solutions of the vorticity equations in Besov spaces 
near the critical space $B^1_{2,1}$. 
As a consequence we show that a sequence of best constants of the Sobolev embedding theorem 
near the critical function space is not continuous. 
\end{abstract} 

\maketitle

\section{Introduction} 
\label{sec:Intro} 

We study the Cauchy problem for the Euler equations of an incompressible and inviscid fluid 
\begin{align} \nonumber 
&u_t + u\cdot\nabla u = - \nabla p, 
\qquad 
t \geq 0, \; x \in \mathbb{R}^n 
\\ 
\label{E} 
&\mathrm{div}\, u = 0 
\\ 
\nonumber 
&u(0) = u_0 
\end{align} 
where $u=u(t,x)$ and $p=p(t,x)$ denote the velocity field and the pressure function of the fluid respectively. 
The first rigorous results for \eqref{E} were proved in the framework of H\"older spaces by 
Gyunter \cite{Gu}, Lichtenstein \cite{Li} and Wolibner \cite{Wo}. 
More refined results using a similar functional setting were obtained subsequently by 
Kato \cite{Ka}, Swann \cite{Sw}, Bardos and Frisch \cite{BF}, Ebin \cite{Eb}, Chemin \cite{Ch}, 
Constantin \cite{Co1} and Majda and Bertozzi \cite{MB} among others. 
The main focus in these papers was on existence and uniqueness of $C^{1,\alpha}$ solutions 
and the question of continuity with respect to initial conditions was not explicitly addressed. 
Recall that, according to the definition of Hadamard, a Cauchy problem is said to be 
locally well-posed in a Banach space $X$ if for any initial data in $X$ there exists a unique solution 
which persists for some $T>0$ in the space $C([0,T), X)$ and which depends continuously on the data. 
Otherwise the problem is said to be ill-posed. 
It was pointed out by Kato \cite{KaKdV} that this notion of well-posedness is rather strong and 
may not be suitable for certain problems studied in the literature. Instead, it is frequently required that 
the solution persist in a larger space such as $L^\infty([0,T), X)$ or $C_w([0,T), X)$ (the subscript $w$ 
indicates weak continuity in the time variable). 

Systematic studies of ill-posedness of the Cauchy problem \eqref{E} are of a more recent date 
and concern a wide range of phenomena including 
gradual loss of regularity of the solution map, energy dissipation and non-uniqueness of weak solutions, 
see e.g. Yudovich \cite{Yu2}, Koch \cite{Ko}, Morgulis, Shnirelman and Yudovich \cite{MSY}, Eyink \cite{Ey}, 
Constantin, E and Titi \cite{CET} or Shnirelman \cite{Sh2}. 
Recently, Bardos and Titi \cite{BT} found examples of solutions in H\"older spaces $C^\alpha$ 
and the Zygmund space $B^1_{\infty,\infty}$ which exhibit an instantaneous loss of smoothness 
in the spatial variable for any $0<\alpha<1$. 
Similar examples in logarithmic Lipschitz spaces $\mathrm{logLip}^\alpha$ were given by 
the authors in \cite{MY}. 
In another direction Cheskidov and Shvydkoy \cite{CS} constructed periodic solutions that are discontinuous 
in time at $t=0$ in the Besov spaces $B^s_{p, \infty}$ where $s>0$ and $2<p\leq\infty$. 
In particular, it follows that the Cauchy problem \eqref{E} is not well-posed in the sense of Hadamard 
in $C([0,T), B^s_{p,\infty})$ although it is known that the corresponding solution map 
is well-defined in $L^\infty([0,T),B^s_{p,\infty})$, see for instance \cite{BCD}, Chap. 7.  
More recently, in a series of papers Bourgain and Li \cite{BL, BL1} constructed smooth solutions 
which exhibit 
instantaneous blowup 
in borderline spaces such as $W^{n/p+1,p}$ for any $1 \leq p < \infty$ and 
$B^{n/p+1}_{p,q}$ for any $1 \leq p < \infty$ and $1 < q \leq \infty$ 
as well as in the standard spaces $C^k$ and $C^{k-1,1}$ for any integer $k \geq 1$; 
see also Elgindi and Masmoudi \cite{ElMa} and \cite{MY2}. 
As observed in \cite{BL1} the cases $C^k$ and $C^{k-1,1}$ are particularly intriguing in view of 
the classical existence and uniqueness results mentioned above. 

One of our goals in this paper is to revisit the picture of local well-posedness in the sense of Hadamard 
for the Euler equations in H\"older spaces. 
We present a simple example based on a DiPerna-Majda type shear flow which shows that in general 
the data-to-solution map of \eqref{E} is not continuous into the space $L^\infty([0,T),C^{1,\alpha})$ 
for any $0<\alpha<1$. 
On the other hand, we show that continuity of this map is restored (in the strong sense) if the Cauchy problem 
is restricted to the so called little H\"older space $c^{1,\alpha}$. 
The failure of continuity in our example does not seem to be related to the mechanism 
described in \cite{BL1} which essentially relies on unboundedness of the double Riesz transform in $L^\infty$. 
Rather, it can be explained by the fact that smooth functions are not dense 
in the standard $C^{1,\alpha}$ spaces. 
This phenomenon should be compared with the results of \cite{CS} where,
however, as mentioned above, the ill-posedness mechanism is different 
and with those of \cite{HM} where it is shown that the solution map cannot be uniformly continuous 
in the Sobolev space $H^s$ with $s>0$.
We point out that continuity of the solution map for the Euler equations in Sobolev spaces 
$W^{s,p}$ for $p\geq 2$ and $s > 2/p + 2$ is of course well known 
(see e.g., Ebin and Marsden \cite{EbMa}, Kato and Lai \cite{KL} or Kato and Ponce \cite{KP-Duke}, 
see also Appendix.
However, we could not find the corresponding result for $c^{1,\alpha}$ in the literature 
although it should be familiar to the experts in the field. 
\begin{theorem} \label{LIP} 
The solution map of the incompressible Euler equations \eqref{E} is not continuous as a map 
from 
$C^{1,\alpha}(\mathbb{R}^3)$ 
to 
$L^\infty([0,T), C^{1,\alpha}(\mathbb{R}^3))$ 
for any $0<\alpha<1$. 
\end{theorem}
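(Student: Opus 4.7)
The plan is to exhibit a DiPerna-Majda type shear flow, perturbed by an arbitrarily small Galilean boost, as the counterexample. I first fix $g \in C^{1,\alpha}(\mathbb{R}) \setminus c^{1,\alpha}(\mathbb{R})$ whose derivative has a genuine Hölder cusp at the origin, concretely $g(y) = y|y|^\alpha \chi(y)$ with $\chi$ a smooth cutoff equal to $1$ near $0$, so that $g'(y) = (1+\alpha)|y|^\alpha$ on a neighborhood of $y=0$ and $g'(0) = 0$. With any bounded $f \in C^{1,\alpha}(\mathbb{R})$ (for concreteness one may take $f\equiv 0$), I set $u_0(x) = (f(x_2), 0, g(x_1)) \in C^{1,\alpha}(\mathbb{R}^3)$. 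A direct verification shows that
\[
u(t,x) = \bigl(f(x_2),\, 0,\, g(x_1 - t f(x_2))\bigr)
\]
is an exact classical Euler solution with vanishing pressure.

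For the perturbation I define $u_0^n(x) = u_0(x) + (c_n,0,0)$ with $c_n \downarrow 0$. Since the perturbation is a constant vector, $\|u_0^n - u_0\|_{C^{1,\alpha}(\mathbb{R}^3)} = c_n \to 0$. By the same verification (equivalently by Galilean invariance), the solution launched from $u_0^n$ is
\[
u^n(t,x) = \bigl(f(x_2) + c_n,\, 0,\, g(x_1 - t(f(x_2)+c_n))\bigr),
\]
again with zero pressure, so these are the unique classical solutions.

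The difference at time $t$ is $(c_n, 0, g(x_1 - tf(x_2) - tc_n) - g(x_1 - tf(x_2)))$. Its $C^{1,\alpha}_x$-norm on $\mathbb{R}^3$ dominates the one-dimensional Hölder seminorm obtained by taking $\partial_{x_1}$ of the third component and restricting to a horizontal slice where $f(x_2)$ vanishes, namely $[g'(\cdot - tc_n) - g'(\cdot)]_{C^\alpha(\mathbb{R})}$. Evaluating the Hölder quotient of $\phi(x_1) := g'(x_1 - tc_n) - g'(x_1)$ at the two points $x_1 = 0$ and $x_1 = tc_n$, and using $g'(y) = (1+\alpha)|y|^\alpha$ near $y=0$ together with $g'(0)=0$, I compute
\[
|\phi(0) - \phi(tc_n)| = \bigl|g'(-tc_n) + g'(tc_n) - 2g'(0)\bigr| = 2(1+\alpha)(tc_n)^\alpha,
\]
which after dividing by $|tc_n|^\alpha$ yields a seminorm lower bound of $2(1+\alpha)$, uniform in $n$ and in $t>0$. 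Consequently $\sup_{t \in [0,T)} \|u^n(t) - u(t)\|_{C^{1,\alpha}(\mathbb{R}^3)} \geq 2(1+\alpha)$ for every $n$, while $\|u_0^n - u_0\|_{C^{1,\alpha}} \to 0$, contradicting continuity of the solution map.

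The main subtlety I expect is certifying that the Hölder quotient remains bounded below at arbitrarily small scales $tc_n$; this is precisely the manifestation of the fact that $g \notin c^{1,\alpha}$, consistent with the companion continuity result stated in the introduction for data restricted to the little Hölder subspace. The rest is a direct check: smoothness of $u$ and $u^n$ away from the cusp of $g'$ makes them the unique classical solutions, and the component-wise computation of $C^{1,\alpha}$ norms requires nothing beyond the definition of the Hölder seminorm.
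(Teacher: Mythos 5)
Your proposal is correct and is essentially the paper's own argument: both use a DiPerna--Majda shear flow $\bigl(F(x_2),0,h(x_1-tF(x_2))\bigr)$ with $h'$ having an $|x_1|^\alpha$-type cusp, perturb the transverse component $F$ by something small in $C^{1,\alpha}$, and bound the H\"older quotient of $h'(\cdot-tF)-h'(\cdot-t\tilde F)$ from below by evaluating at the two points where the respective arguments hit the cusp, obtaining a lower bound ($2(1+\alpha)$ in your normalization, $2$ in the paper's) that is uniform in $t$ on an interval. The only difference is that you take the perturbation to be a constant (a Galilean boost) rather than a general nearby profile $g(x_2)$, which is a harmless specialization.
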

\begin{theorem} \label{LWP} 
The incompressible Euler equations \eqref{E} are locally well-posed 
in the sense of Hadamard in the little H\"older space 
$c^{1,\alpha}(\mathbb{R}^n)$ 
for any $0<\alpha<1$ and $n=2$ or $3$. 
\end{theorem}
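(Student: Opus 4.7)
The plan is to deduce Hadamard well-posedness in $c^{1,\alpha}$ from the classical theory in $C^{1,\alpha}$ by a Bona--Smith type regularization argument, exploiting the fact that, in contrast to $C^{1,\alpha}$, smooth functions are dense in $c^{1,\alpha}$. Given $u_0 \in c^{1,\alpha}(\mathbb{R}^n)$, the classical results of Lichtenstein, Wolibner, Kato and Chemin already provide a unique local solution $u \in L^\infty([0,T], C^{1,\alpha})$ with lifespan $T = T(\|u_0\|_{C^{1,\alpha}})$, so the remaining task is to promote this to a strongly continuous curve in $c^{1,\alpha}$ with a continuous data-to-solution map. I would regularize the initial datum by $u_0^\epsilon = J_\epsilon u_0$ and use that, because $u_0$ lies in the little H\"older space, $u_0^\epsilon \to u_0$ in the $C^{1,\alpha}$-norm (this convergence is precisely what fails for generic $C^{1,\alpha}$ data, and is the point of the little H\"older restriction). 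The associated smooth solutions $u^\epsilon$ then exist on a common time interval $[0,T]$ since $\|u_0^\epsilon\|_{C^{1,\alpha}} \leq \|u_0\|_{C^{1,\alpha}}$.

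The central step is the uniform convergence $\|u^\epsilon(t) - u(t)\|_{C^{1,\alpha}} \to 0$ as $\epsilon \to 0$, uniformly in $t \in [0,T]$, which I would establish in two pieces. First, the difference $w = u^\epsilon - u$ satisfies the transport equation $\partial_t w + u \cdot \nabla w = - w \cdot \nabla u^\epsilon - \nabla q$ with $\Delta q = -2\partial_i u^j \partial_j w^i$; standard Calder\'on--Zygmund and Gronwall estimates, carried out in a Besov space close to $C^1$ (for instance $B^1_{\infty,1}$), yield a bound of the form $\|w(t)\|_{C^1} \lesssim e^{Ct}\|u_0^\epsilon - u_0\|_{C^1}$. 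Second, propagating the H\"older modulus of $\nabla w$ is handled by writing the ODE satisfied by $(\nabla w) \circ \eta_t$ along the Lagrangian flow $\eta_t$ of $u$, using that $\eta_t$ inherits $c^{1,\alpha}$ regularity from $u$ together with Schauder estimates for the Poisson problem defining $q$, and closing a Gronwall inequality for an equi-continuous modulus of continuity of $\nabla w$. Combining the two pieces yields $u^\epsilon(t) \to u(t)$ in $C^{1,\alpha}$, whence $u(t) \in c^{1,\alpha}$ and $t \mapsto u(t)$ is norm continuous into $c^{1,\alpha}$.

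Continuous dependence then follows by a standard Bona--Smith triangle inequality: if $u_0^{(n)} \to u_0$ in $c^{1,\alpha}$, the smooth-data solutions $u^{(n),\epsilon}$ issuing from $J_\epsilon u_0^{(n)}$ converge to $u^\epsilon$ in $C^{1,\alpha}$ for each fixed $\epsilon$ (indeed in any higher norm, by the classical well-posedness in $H^s$), while the previous estimate controls both $u^{(n)} - u^{(n),\epsilon}$ and $u - u^\epsilon$ in $C^{1,\alpha}$ uniformly in $n$; taking $\epsilon$ small first and then $n$ large closes the argument. The main obstacle is the second piece of the core estimate, namely the uniform-in-$\epsilon$ propagation of the little H\"older modulus of $\nabla w$: the pressure term $\nabla^2 q$ is a Calder\'on--Zygmund transform of quadratic quantities in $\nabla u$ and $\nabla w$, and while such operators map $c^\alpha$ into itself --- which is exactly where the little H\"older hypothesis enters decisively --- turning this qualitative preservation into a quantitative, equi-uniform statement compatible with the Gronwall step is the technical core of the proof.
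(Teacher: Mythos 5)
Your strategy is genuinely different from the paper's: you work in Eulerian variables and run a Bona--Smith regularization, whereas the paper passes entirely to Lagrangian coordinates, rewrites the Euler equations as the ODE $\dot\eta = F_{u_0}(\eta)$ on the set $\mathscr{U}_\delta$ of $c^{1,\alpha}$ diffeomorphisms, checks that $F_{u_0}$ maps into $c^{1,\alpha}$ with a bounded Gateaux derivative (hence is locally Lipschitz, and depends linearly on $\omega_0$), and then obtains existence, uniqueness and continuous --- in fact differentiable --- dependence of $\eta$ on $u_0$ directly from the Picard theorem with parameters in a Banach space. Density of smooth functions in $c^{1,\alpha}$ enters only through soft composition and inversion lemmas needed to transfer continuity from $\eta$ back to $u=\dot\eta\circ\eta^{-1}$. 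That route never has to estimate the difference of two Eulerian solutions at the top regularity, which is exactly what your plan must confront head-on.

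As written, your proposal has a genuine gap, and it sits where you yourself flag it. First, the claimed bound $\|w(t)\|_{C^1}\lesssim e^{Ct}\|u_0^\epsilon-u_0\|_{C^1}$ is not available in $C^1$ itself: $\nabla q$ is recovered from $\partial_i u^j\partial_j w^i$ by a double Riesz transform, which is unbounded on $L^\infty$ --- this is precisely the Bourgain--Li mechanism the paper cites. Retreating to $B^1_{\infty,1}$ can in principle repair this (and $c^{1,\alpha}\hookrightarrow B^1_{\infty,1}$ gives convergence of the mollified data there), but the transport, paraproduct and pressure estimates in that space are not carried out. Second, and more seriously, the ``equi-uniform propagation of the little H\"older modulus of $\nabla w$'' is not an incidental technicality: it is the entire content of continuous dependence at top regularity. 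The standard way to close a Bona--Smith argument is to interpolate a low-norm difference bound that is $o(\epsilon^{\,\beta})$ against a high-norm bound $\|u^\epsilon\|_{C^{2,\alpha}}\lesssim\epsilon^{-1}\cdot o(1)$ with rates specific to little H\"older mollification, and then verify that the product survives the Gronwall step; none of these rates are stated or checked. Until that step is supplied, your argument yields only the classical existence and uniqueness in $L^\infty_t C^{1,\alpha}$ together with a plausible programme for continuity, not a proof of Hadamard well-posedness in $c^{1,\alpha}$.
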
 
In particular, Theorem \ref{LWP} implies that the solution map is continuous from bounded subsets of 
$c^{1,\alpha}(\mathbb{R}^n)$ to $C([0,T), c^{1,\alpha}(\mathbb{R}^n))$. 
\begin{remark}
 Although in the present paper we are primarily concerned with the local-in-time problem, a few comments on global well-posedness are in order.
In the light of Theorem \ref{LWP} it is natural to expect that Wolibner's global result can be reformulated 
as a global Hadamard well-posedness result in $c^{1,\alpha}$. 
In fact, the proof in Section \ref{Holder-wp} below shows that if $u_0$ is in $c^{1,\alpha}$ 
then the corresponding particle trajectories $\eta(t)$ will retain their $c^{1,\alpha}$ regularity 
for any finite interval of time (they will remain in the set $\mathscr{U}_\delta$ for some suitably chosen $\delta >0$, 
cf. \eqref{U}-\eqref{eq:jac} below) 
provided that the flow can be continued in $C^{1,\alpha}$. 
The latter however is guaranteed by the Beale-Kato-Majda criterion (see e.g. \cite[Theorem 4.3]{MB}) 
because vorticity is conserved along particle trajectories in 2D. 
\end{remark}
\begin{remark}
The proof of Theorem \ref{LIP} is based on a local property of $C^{1,\alpha}$ to construct a counterexample. 
It would be also interesting to find an explicit counterexample in the Besov space framework 
$B^{1+\alpha}_{\infty,\infty}$ such that $\inf_{\ell\in\mathbb{Z}_+}2^{(1+\alpha)\ell}\|\hat\psi_\ell\ast u_0\|_{L^\infty}>0$ 
(non-decaying property on the Fourier side).  
\end{remark} 

As an application of Theorem \ref{LWP} we prove 
an ill-posedness result 
for the vorticity equations that involves a family of Besov spaces. Although this result is weaker than 
instantaneous blowup 
described by Bourgain and Li our methods can be applied in the borderline end-point spaces such as 
$B^2_{2,1}(\mathbb{R}^2)$ which lie just outside the range of the spaces considered in \cite{BL}. 
Recall that existence and uniqueness results for \eqref{E} in $B^2_{2,1}(\mathbb{R}^2)$ 
are already known, cf. e.g. Vishik \cite{V} or Chae \cite{Chae}. 
The proof uses continuity of the data-to-solution map in $c^{1,\alpha}$ 
as well as several technical lemmas proved in our earlier paper \cite{MY2}. 
In this respect the present paper can be viewed as a continuation of \cite{MY2}. 
\begin{theorem}\label{LIP-Besov} 
Let $M_j \nearrow \infty$ be an increasing sequence of positive numbers. 
There exists a sequence of smooth rapidly decaying initial data $\{\tilde{u}_{0,j}\}_{j=1}^\infty$ 
and two sequences of indices $\{r_j\}_{j=1}^\infty$ and $\{q_j\}_{j=1}^\infty$ 
with $r_j \to 2$ and $q_j \to 1$ such that 
\begin{equation*} 
\|\tilde{u}_{0,j}\|_{B^2_{r_j, q_j}} \lesssim 1 
\quad \text{and} \quad 
\|\tilde{u}_j(t)\|_{B^2_{r_j, q_j}} > M_j 
\quad \text{for some} \;\, 0<t<M_j^{-3}. 
\end{equation*} 
\end{theorem}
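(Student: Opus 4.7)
The plan is to combine the continuity of the solution map in $c^{1,\alpha}$ from Theorem \ref{LWP} with the divergence of the embedding constants for $B^{2}_{r,q}(\mathbb{R}^2) \hookrightarrow B^{2}_{2,1}(\mathbb{R}^2) \hookrightarrow \mathrm{Lip}$ as $(r,q) \to (2,1)$, together with the technical lemmas of \cite{MY2}, to produce norm inflation for \eqref{E} in Besov spaces at indices approaching the critical pair. The guiding idea is to construct solutions whose gradient grows in a quantitatively controlled way (detected via the continuity in $c^{1,\alpha}$) and then use the divergent embedding constants to extract arbitrarily large $B^{2}_{r_j,q_j}$ norms from this controlled Lipschitz growth.

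First, I would fix $\alpha\in(0,1)$ and select a smooth, rapidly decaying seed velocity $u_0 \in c^{1,\alpha}(\mathbb{R}^2)$ whose Euler evolution $u(t)$ exhibits quantifiable gradient growth in $c^{1,\alpha}$ on some interval $[0,T_0)$. Theorem \ref{LWP} guarantees that the solution map is continuous from a neighborhood of $u_0$ into $C([0,T_0), c^{1,\alpha})$, so small perturbations remain close in $C^1$. Next, for each $j$ I would construct $\tilde u_{0,j}$ by the usual scaling/concentration $\tilde u_{0,j}(x) = \mu_j u_0(\lambda_j x)$ with $\mu_j, \lambda_j > 0$ to be tuned. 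The Euler scaling symmetry yields $\tilde u_j(t,x) = \mu_j u(\mu_j \lambda_j t, \lambda_j x)$, while the Besov norm scales, up to bounded factors depending on $q$, as $\mu_j \lambda_j^{\,2 - 2/r}\|u_0\|_{B^{2}_{r,q}}$. Imposing $\mu_j \lambda_j^{\,2-2/r_j} \sim 1$ then keeps the initial norms uniformly bounded, and taking $\lambda_j$ large compresses the evolution into a very short time window.

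The norm inflation then comes from the following mechanism: with $r_j \to 2$ and $q_j \to 1$ approaching the critical pair, the best constant $C(r_j,q_j)$ in the embedding $B^{2}_{r_j,q_j} \hookrightarrow \mathrm{Lip}$ diverges, and by duality one can produce velocity fields whose Lipschitz seminorm remains modest but whose $B^{2}_{r_j, q_j}$ norm is inflated by a factor proportional to $C(r_j, q_j)$. Feeding the quantitative lower bounds on $\|\nabla \tilde u_j(t)\|_{L^\infty}$ supplied by the lemmas of \cite{MY2} into this inversion produces $\|\tilde u_j(t)\|_{B^{2}_{r_j,q_j}} \ge M_j$ at the rescaled instant $t \sim (\mu_j \lambda_j T_0)^{-1}$.

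The main obstacle, and heart of the argument, is the quantitative coordination of the parameters $\mu_j, \lambda_j, r_j, q_j$ with the prescribed sequence $M_j$: one must simultaneously secure that (a) the initial norms $\|\tilde u_{0,j}\|_{B^{2}_{r_j,q_j}}$ remain $O(1)$, (b) the divergence of $C(r_j,q_j)$ outpaces the controlled gradient growth from Theorem \ref{LWP} by the desired factor $M_j$, and (c) the resulting inflation time falls inside the window $(0, M_j^{-3})$. The $M_j^{-3}$ threshold should emerge naturally from the product $\mu_j \lambda_j$ together with the polynomial relationship between these parameters and $M_j$ dictated by the rate of blow-up of $C(r_j,q_j)$ near $(2,1)$.
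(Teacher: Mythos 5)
Your proposal identifies two correct ingredients --- the continuity of the solution map in $c^{1,\alpha}$ from Theorem \ref{LWP} and the degeneration of the Besov scale as $(r,q)\to(2,1)$ --- but the central inflation mechanism you propose does not work, and it is not the one the paper uses. First, the divergence of the best constant $C_j$ in the embedding $\|\nabla u\|_\infty \leq C_j\|u\|_{B^2_{r_j,q_j}}$ is derived in the paper as a \emph{consequence} of Theorem \ref{LIP-Besov} (see the remark following the statement), not used as an input; and even granting it, the inequality points the wrong way for your purposes: a lower bound on $\|\nabla \tilde u_j(t)\|_\infty$ only yields $\|\tilde u_j(t)\|_{B^2_{r_j,q_j}} \geq C_j^{-1}\|\nabla\tilde u_j(t)\|_\infty$, which \emph{weakens} as $C_j\to\infty$. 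There is no "duality inversion" that converts controlled Lipschitz growth into a large Besov norm of the actual Euler solution; one must show the dynamics itself creates the high-frequency content measured by $B^2_{r_j,q_j}$. Second, the scaling ansatz $\tilde u_{0,j}(x)=\mu_j u_0(\lambda_j x)$ cannot produce inflation: the ratio $\|\tilde u_j(t)\|_{B}/\|\tilde u_{0,j}\|_{B}$ is scale-invariant, so it equals the inflation ratio of the fixed seed $u$, which is bounded on its local existence interval by the a priori estimate $\sup_{t\leq T}\|u(t)\|_{B^2_{r,q}}\leq C_0\|u_0\|_{B^2_{r,q}}$ valid for each fixed $(r,q)$ (Vishik, Chae). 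Relatedly, for a fixed smooth seed of size $O(1)$ one cannot have $\|D\eta(t)\|_\infty > M$ within time $M^{-3}$, so the "quantifiable gradient growth" you posit is precisely what requires a nontrivial construction.

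The paper's actual route has a two-tier structure that your sketch is missing. The background vorticity \eqref{B2:eq:iv} is a Bourgain--Li-type sum of $N$ odd-odd symmetric bumps accumulating at the origin, normalized so that $\|\omega_0\|_{B^1_{r,q}}\lesssim M^{-2}$ (Lemma \ref{B2:lem:omega-0}) yet producing, for $N$ large and $r$ close to $2$, a hyperbolic flow with $\sup_{0\leq t\leq M^{-3}}\|D\eta(t)\|_\infty > M$ (Proposition \ref{B2:prop:Lag}); this is where the closeness of $(r_j,q_j)$ to the critical pair actually enters. One then superposes a high-frequency modulated perturbation $\beta_j^n$ localized where the deformation gradient is large: its $B^1_{r_j,q_j}$ norm stays $O(1)$, but after transport by the flow its gradient is stretched by the factor $M_j$, giving $\|\nabla\beta_j^n\cdot\nabla^\perp\eta_j^2(t^*)\|_{L^{r_j}}\gtrsim M_j$ and hence the lower bound in \eqref{vort} via $\dot B^1_{r_j,q_j}\supset$-type control of $\dot W^{1,r_j}$. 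Theorem \ref{LWP} is used at exactly one point, but it is essential: since $\beta_j^n\to 0$ in $c^{1,\alpha}$ as $n\to\infty$, the perturbed flow $\eta_j^n$ stays $C^1$-close to $\eta_j$, so the large deformation survives the perturbation. Without this decomposition into a deformation-generating background and a deformation-detecting high-frequency probe, the argument does not close.
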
 
\begin{remark} 
It is of interest to compare this result with the a'priori estimates for local solutions 
in Besov norms. If we let $C_j>0$ denote the best constant in the Besov version of 
the Sobolev embedding 
\begin{equation} \label{embedding} 
\|\nabla\tilde{u}_j\|_\infty \leq C_j \|\tilde{u}_j\|_{B^2_{r_j,q_j}} 
\end{equation} 
(cf. Chae \cite[Rem. 2.1]{Chae}) 
then Theorem \ref{LIP-Besov} implies that $C_j \nearrow \infty$ or else we get a contradiction with 
the standard bound 
$
\sup_{0 \leq t \leq T_j}{\|\tilde{u}_j(t)\|_{B^2_{r_j,q_j}}} \leq C_{0,j}\|\tilde{u}_{0,j}\|_{B^2_{r_j,q_j}} 
$ 
where $C_{0,j}$ depends on $C_j$.
On the other hand, for $1\leq p<\infty$ and $1\leq q\leq \infty$, we also have the estimate 
\begin{equation} \label{embedding-limit} 
\|\nabla\tilde{u}_\infty\|_\infty \leq C_\infty \|\tilde{u}_\infty\|_{B^{1+2/p}_{p,q}} 
\end{equation} 
for some finite constant $C_\infty >0$ if and only if $q= 1$ (see \cite[Theorem 11.4, p.170]{T}). 
This suggests that the dependence of the constants in \eqref{embedding} and \eqref{embedding-limit} 
on the Besov parameters is not continuous. 
To the best of our knowledge this dependence has not been investigated in the literature. 
\end{remark} 

In the next section we recall the basic set up and notation. 
In Section \ref{shear flow} we prove Theorem \ref{LIP} by constructing a shear flow counterexample 
in the $C^{1,\alpha}$ space. 
Local Hadamard well-posedness in $c^{1,\alpha}$ is shown in Section \ref{Holder-wp}. 
The proof of Theorem \ref{LIP-Besov} is given in Section \ref{Besov}.

\section{The basic setup: function spaces and diffeomorphisms} 
\label{sec:prelim} 

Let $\psi_0 \in \mathscr{S}(\mathbb{R}^n)$ be any function of Schwartz class satisfying $0 \leq \psi_0 \leq 1$ 
and $\mathrm{supp}\, \psi_0 \subset \{ \xi \in \mathbb{R}^n: 1/2 \leq |\xi| \leq 2 \}$ 
and such that 
\begin{equation*} 
\sum_{l \in \mathbb{Z}} \psi_l(\xi) = 1, 
\quad 
\text{for any} \; \xi \neq 0 
\end{equation*} 
where $\psi_l(\xi) = \psi_0(2^{-l}\xi)$. 
For any $s >0$ and $1 \leq p, q \leq \infty$ let $B^s_{p,q}(\mathbb{R}^n)$ 
denote the inhomogeneous Besov space equipped with the norm 
\begin{equation} \label{Besov-inh} 
\| f \|_{B^s_{p,q}} = \| f \|_{L^p} + \| f \|_{\dot{B}^s_{p,q}} 
\end{equation} 
where the homogeneous semi-norm is given by 
\begin{equation} \label{Besov-h} 
\| f \|_{\dot{B}^s_{p,q}} 
= \left\{ 
\begin{matrix} \displaystyle
~~~\bigg( \sum_{l \in \mathbb{Z}} 2^{slq}\|\widehat{\psi}_l \ast f\|_{L^p}^q \bigg)^{1/q}& 
\mathrm{if} \quad 1\leq q <\infty 
\\  \displaystyle
\sup_{l \in \mathbb{Z}}{ 2^{sl}} \| \widehat{\psi}_l \ast f\|_{L^p}& 
\mathrm{if} \qquad\;\; q=\infty 
\end{matrix} 
\right.
\end{equation} 
for any $f \in \mathscr{S}'(\mathbb{R}^n)$. 
In particular, if $s=k+\alpha$ is not an integer then $B^s_{\infty,\infty}$ is the H\"older space 
$C^{k,\alpha}(\mathbb{R}^n)$ with the standard norm 
\begin{align*} 
\|\varphi\|_{k,\alpha} 
= 
\| \varphi \|_{C^k} 
+ 
[D^k\varphi]_\alpha 
\end{align*} 
where 
$$ 
[D^k\varphi]_\alpha 
= 
\sum_{|\beta|=k} \sup_{x\neq y} \frac{|D^\beta\varphi(x) - D^\beta\varphi(y)|}{|x-y|^\alpha}, 
\qquad\quad 
0 < \alpha < 1, \; k \in \mathbb{N}. 
$$ 
Let $c^{k,\alpha}(\mathbb{R}^n)$ denote the closed subspace of $C^{k,\alpha}(\mathbb{R}^n)$ 
consisting of those functions whose derivatives satisfy the vanishing condition 
\begin{align} \label{nLH} 
\lim_{h \to 0} \sup_{0<|x-y|<h} \frac{|D^\beta\varphi(x) - D^\beta\varphi(y)|}{|x-y|^\alpha} = 0 
\end{align} 
for any multi-index $|\beta|=k$. 
It is well known that $c^{k,\alpha}(\mathbb{R}^n)$ is an interpolation space containing the smooth functions 
as a dense subspace, cf. e.g. \cite{BeLo}. 

In what follows we will use an alternative formulation of the fluid equations 
in terms of particle trajectories and vorticity. 
Any sufficiently smooth velocity field $u$ solving \eqref{E} has a flow which traces out 
a curve $t \to \eta(t,x)$ of diffeomorphisms starting at the identity configuration $e(x)=x$ 
with initial velocity $u_0$. 
Using the incompressibility constraint 
$\det{D\eta(t,x)}=1$ 
and the Biot-Savart law the equations satisfied by the flow can be written in the form 
\begin{align} \label{IDE} 
&\frac{d\eta}{dt}(t,x) 
= 
\int_{\mathbb{R}^n} K_n\big(\eta(t,x) - \eta(t,y) \big) \omega(t,\eta(t,y)) \, dy, 
\qquad 
t \geq 0,\; x \in \mathbb{R}^n 
\\ \nonumber 
&\eta(0, x) = x 
\end{align} 
where $\omega=\mathrm{curl}\, u$ is the vorticity\footnote{If $n=2$ we can identify the vorticity of $u$ 
with the function $\omega=\nabla^\perp\cdot u$ and if $n=3$ with the vector field 
$\omega=\nabla \times u$.} 
%
and the kernel $K_n$ is given by 
\begin{align} \label{eq:KBS2} 
K_2(x) = \frac{1}{2\pi} \left( -\frac{x_2}{|x|^2}, \frac{x_1}{|x|^2} \right), 
\qquad 
x \in \mathbb{R}^2 
\end{align} 
and 
\begin{align} \label{eq:KBS3} 
K_3(x)y  = \frac{1}{4\pi} \frac{x \times y}{|x|^3}, 
\qquad 
x, y \in \mathbb{R}^3. 
\end{align} 

For our purposes it will be sufficient to take as a configuration space of the fluid the set of 
those diffeomorphisms of $\mathbb{R}^n$ which differ from the identity by a function of 
class $c^{1,\alpha}$. Let 
\begin{align} \label{U} 
\mathscr{U}_\delta = \Big\{ 
\eta:\mathbb{R}^n \to \mathbb{R}^n: 
\eta = e + \varphi_\eta,~ \varphi_\eta \in c^{1,\alpha}(\mathbb{R}^n) 
~\text{and}~ 
\|\varphi_\eta\|_{1,\alpha} < \delta 
\Big\} 
\end{align} 
where $\delta >0$ is chosen small enough so that 
\begin{equation} \label{eq:jac} 
\inf_{x \in \mathbb{R}^n}{\det D\eta(x)} > \frac{1}{2}. 
\end{equation} 
Clearly, $\mathscr{U}_\delta$ can be identified with an open ball centered at the origin 
in $c^{1,\alpha}(\mathbb{R}^n)$. 
The next two lemmas collect some elementary properties 
of compositions and inversions of diffeomorphisms in $\mathscr{U}_\delta$ that will be used 
in Section \ref{Holder-wp}. 
\begin{lem} \label{lemH} 
Let $0<\alpha<1$. 
Suppose that $\eta$ and $\xi$ are in $\mathscr{U}_\delta$ and $\psi \in c^{1,\alpha}(\mathbb{R}^n)$. 
Then 
$\psi\circ\eta$ and $\psi\circ\eta^{-1}$ are also of class $c^{1,\alpha}$ and we have 
\begin{align} \label{comp1} 
\| \psi \circ \eta \|_{1,\alpha} \lesssim C \|\psi\|_{1,\alpha} 
\quad \text{and} \quad 
\| \psi \circ \eta^{-1} \|_{1,\alpha} \lesssim C \|\psi\|_{1,\alpha} 
\end{align} 
where $C>0$ depends only on $\delta$ and $\alpha$. 
Furthermore, we have $\xi\circ\eta-e$ and $\eta^{-1}-e$ are also of class $c^{1,\alpha}$.
\end{lem}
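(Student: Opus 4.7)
The plan is to split the argument into three pieces: an operator-norm bound for composition in the ambient $C^{1,\alpha}$ space, a density argument that promotes the statement to $c^{1,\alpha}$, and a separate treatment of the inverse $\eta^{-1}$.

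For the bound $\|\psi\circ\eta\|_{1,\alpha} \lesssim C\|\psi\|_{1,\alpha}$, I would apply the chain rule $D(\psi\circ\eta) = (D\psi\circ\eta)\,D\eta$ and decompose
\[
D(\psi\circ\eta)(x) - D(\psi\circ\eta)(y) = \bigl(D\psi(\eta(x)) - D\psi(\eta(y))\bigr)D\eta(x) + D\psi(\eta(y))\bigl(D\eta(x)-D\eta(y)\bigr).
\]
On the first term I would use $|\eta(x)-\eta(y)| \leq (1+\delta)|x-y|$ together with $[D\psi]_\alpha$; on the second I would use $\|D\psi\|_\infty [D\varphi_\eta]_\alpha$. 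Dividing by $|x-y|^\alpha$ and taking suprema gives the seminorm bound, while the $C^1$ part of the norm is immediate from the product rule.

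To promote this to the little Hölder class I would use the fact, recalled in the paper, that smooth bounded functions are dense in $c^{1,\alpha}$. Choose $\psi_n \in C^\infty$ with $\psi_n \to \psi$ in $C^{1,\alpha}$. For each smooth $\psi_n$ the same decomposition gives
\[
\frac{|D(\psi_n\circ\eta)(x) - D(\psi_n\circ\eta)(y)|}{|x-y|^\alpha} \leq \|D^2\psi_n\|_\infty (1+\delta)|x-y|^{1-\alpha} + \|D\psi_n\|_\infty\,\omega_\eta(|x-y|),
\]
where $\omega_\eta(h) = \sup_{0<|x-y|<h}|D\varphi_\eta(x)-D\varphi_\eta(y)|/|x-y|^\alpha$ tends to $0$ as $h \to 0$ by the hypothesis $\varphi_\eta \in c^{1,\alpha}$. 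Hence each $\psi_n\circ\eta$ satisfies the vanishing condition \eqref{nLH} and lies in $c^{1,\alpha}$. Applying the bound from the first step to $\psi - \psi_n$ shows $\psi_n\circ\eta \to \psi\circ\eta$ in $C^{1,\alpha}$, and since $c^{1,\alpha}$ is closed in $C^{1,\alpha}$ we obtain $\psi\circ\eta \in c^{1,\alpha}$.

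For the inverse I would first verify that $\eta$ is a global $C^1$-diffeomorphism of $\mathbb{R}^n$: the bound \eqref{eq:jac} gives local invertibility via the implicit function theorem, while $\|D\varphi_\eta\|_\infty < \delta < 1$ yields the lower Lipschitz estimate $|\eta(x)-\eta(y)| \geq (1-\delta)|x-y|$ (injectivity and closed image), and solving $\eta(x)=y$ by a Banach fixed-point argument for $x\mapsto y-\varphi_\eta(x)$ gives surjectivity, with $\|D\eta^{-1}\|_\infty \leq (1-\delta)^{-1}$. Writing $\zeta=\eta^{-1}$ and using the matrix identity $A^{-1}-B^{-1}=A^{-1}(B-A)B^{-1}$ with $A=D\eta(\zeta(y_1))$, $B=D\eta(\zeta(y_2))$,
\[
\frac{|D\zeta(y_1)-D\zeta(y_2)|}{|y_1-y_2|^\alpha} \lesssim \frac{|D\eta(\zeta(y_1))-D\eta(\zeta(y_2))|}{|\zeta(y_1)-\zeta(y_2)|^\alpha}\Bigl(\frac{|\zeta(y_1)-\zeta(y_2)|}{|y_1-y_2|}\Bigr)^\alpha,
\]
and the last factor is bounded while the first tends to $0$ as $|y_1-y_2|\to 0$ by the vanishing modulus of $D\eta-I$. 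Hence $\zeta-e \in c^{1,\alpha}$. The estimate for $\psi\circ\eta^{-1}$ then follows from the first two steps, and $\xi\circ\eta-e = (\xi-e)\circ\eta + (\eta-e)$ lies in $c^{1,\alpha}$ by the composition result with $\psi=\xi-e$. The main obstacle is the inverse step, where one must simultaneously establish global invertibility of $\eta$ and transfer the vanishing Hölder modulus from $D\eta$ to $D\eta^{-1}$; once the matrix-inversion identity is in place this propagation is straightforward.
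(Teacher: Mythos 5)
Your proposal is correct, and its core — the product-rule decomposition of $[D(\psi\circ\eta)]_\alpha$ into $[D\psi\circ\eta]_\alpha\|D\eta\|_\infty + \|D\psi\circ\eta\|_\infty[D\eta]_\alpha$ and the chain rule $D\eta^{-1}=(D\eta)^{-1}\circ\eta^{-1}$ — is exactly what the paper does. The differences are in the finishing steps. For membership in $c^{1,\alpha}$ the paper argues directly: restricting the seminorm in \eqref{comp2} to $0<|x-y|<h$ and noting $|\eta(x)-\eta(y)|\le\|D\eta\|_\infty h$, the vanishing condition \eqref{nLH} for $D\psi$ and $D\varphi_\eta$ propagates immediately to $D(\psi\circ\eta)$; your route via smooth approximants $\psi_n$ and closedness of $c^{1,\alpha}$ in $C^{1,\alpha}$ is a valid alternative, at the mild cost of needing $\|D^2\psi_n\|_\infty<\infty$ for the approximants (true for mollifications) and of not directly yielding the quantitative modulus. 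For the inverse, the paper uses the adjugate formula $(D\eta)^{-1}=(\det D\eta)^{-1}\mathrm{adj}(D\eta)$ to get \eqref{comp33}, whereas you use $A^{-1}-B^{-1}=A^{-1}(B-A)B^{-1}$; these are interchangeable. You also supply the global invertibility of $\eta$ (lower Lipschitz bound plus a fixed-point argument for surjectivity), which the paper takes for granted in the definition of $\mathscr{U}_\delta$ — a worthwhile addition rather than a deviation. One small point of care in your last step: $\eta^{-1}$ need not itself lie in $\mathscr{U}_\delta$, so when you feed it back into the composition estimate you should note that the bound only requires $\|\varphi_{\eta^{-1}}\|_{1,\alpha}$ to be controlled by a constant depending on $\delta$, which your estimates do provide.
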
 
\begin{proof} 
First, observe that 
$\| \psi\circ\eta\|_\infty = \|\psi\|_\infty$ 
and 
$\| D(\psi\circ\eta)\|_\infty = \|D\psi\|_\infty \|D\eta\|_\infty$ 
and therefore the first of the inequalities in \eqref{comp1} follows at once from 
\begin{align} \label{comp2} 
[D(\psi\circ\eta)]_\alpha 
&\leq 
[D\psi\circ\eta]_\alpha \|D\eta\|_\infty + \|D\psi\circ\eta\|_\infty [D\eta]_\alpha 
\\ \nonumber 
&\leq 
[D\psi]_\alpha \|D\eta\|_\infty^{1+\alpha} + \|D\psi\|_\infty [D\eta]_\alpha 
\end{align} 
and 
$[D\eta]_\alpha = [D\varphi_\eta]_\alpha$ 
where $\eta = e + \varphi_\eta$ with $\|\varphi_\eta\|_{1,\alpha}<\delta$. 
Similarly, we have 
$\|\psi{\circ}\eta^{-1}\|_\infty = \|\psi\|_\infty$ 
and 
from \eqref{eq:jac} and $D\eta^{-1} = (D\eta)^{-1}\circ\eta^{-1}$ we get 
$$ 
\|D(\psi\circ\eta^{-1})\|_\infty \lesssim \|D\psi\|_\infty \|D\eta\|_\infty 
$$ 
and 
\begin{align} \label{comp33} 
[(D\eta)^{-1}]_\alpha 
= 
\big[(\det{D\eta})^{-1} \mathrm{adj}(D\eta) \big]_\alpha 
\lesssim 
(1 + \|D\eta\|_\infty^2) [D\eta]_\alpha 
\end{align} 
which, in turn, with the help of \eqref{comp2} yields 
\begin{align} \label{comp22} 
[D(\psi\circ\eta^{-1})]_\alpha 
\leq 
[D\psi]_\alpha \|D\eta\|_\infty^{1+\alpha} 
+ 
\|D\psi\|_\infty (1+\|D\eta\|_\infty^2) [D\eta]_\alpha. 
\end{align} 
From these bounds we obtain the second of the inequalities in \eqref{comp1}.  

Finally, observe that if $\xi \in \mathscr{U}_\delta$ with $\xi = e + \varphi_\xi$ then 
$\xi\circ\eta = e + \varphi_{\xi\circ\eta}$ 
where 
$\varphi_{\xi\circ\eta} = \varphi_\eta + \varphi_\xi\circ\eta$. 
Therefore, using \eqref{comp1} we get 
\begin{align*} 
\| \varphi_{\xi\circ\eta}\|_{1,\alpha} 
\lesssim 
\|\varphi_\eta\|_{1,\alpha} + \|\varphi_\xi\|_{1,\alpha} 
\end{align*} 
and combining \eqref{comp1} with \eqref{comp2} and the vanishing condition \eqref{nLH} 
we conclude that 
$\varphi_{\xi\circ\eta} \in c^{1,\alpha}(\mathbb{R}^n)$. 
Similarly, we also have $\eta^{-1} = e + \varphi_{\eta^{-1}}$ where 
$\varphi_{\eta^{-1}} = - \varphi_{\eta}\circ\eta^{-1}$. 
Applying the second of the estimates in \eqref{comp1} together with \eqref{comp22} 
and \eqref{nLH} we find again that $\varphi_{\eta^{-1}} \in c^{1,\alpha}(\mathbb{R}^n)$. 
\end{proof} 
\begin{lem} \label{lemHC} 
Let $0<\alpha<1$. 
Suppose that $\eta, \xi$ and $\zeta$ are in $\mathscr{U}_\delta$. Then 
\begin{equation} \label{comp4} 
\| \xi\circ\eta - \zeta\circ\eta\|_{1,\alpha} 
\lesssim 
C \|\varphi_\xi - \varphi_\zeta\|_{1,\alpha} 
\end{equation} 
and for any $\psi \in c^{2,\alpha}(\mathbb{R}^n)$ we have 
\begin{equation} \label{comp3} 
\| \psi\circ\eta - \psi\circ\xi\|_{1,\alpha} 
\lesssim 
C \|\psi\|_{2,\alpha} \|\varphi_\eta - \varphi_\xi\|_{1,\alpha} 
\end{equation} 
where $C>0$ depends only on $\delta$ and $\alpha$. 
Furthermore, the functions $\xi, \eta \to \xi\circ\eta$ and $\eta \to \eta^{-1}$ are continuous 
in the H\"older norm topology. 
\end{lem}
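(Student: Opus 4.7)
My plan is to deduce the two Lipschitz estimates algebraically from Lemma \ref{lemH} and then obtain the continuity statements by an $\varepsilon/3$ density argument. For \eqref{comp4}, the key observation is the identity $\xi\circ\eta - \zeta\circ\eta = (\varphi_\xi - \varphi_\zeta)\circ\eta$, so the bound is immediate from \eqref{comp1} applied with $\psi = \varphi_\xi - \varphi_\zeta$. For \eqref{comp3}, I would use the fundamental theorem of calculus to write
\[
\psi\circ\eta - \psi\circ\xi = \Big(\int_0^1 D\psi\circ\sigma_s\, ds\Big)\cdot (\varphi_\eta - \varphi_\xi), \qquad \sigma_s = \xi + s(\eta-\xi).
\]
Since $\mathscr{U}_\delta$ is convex (being an open ball in $c^{1,\alpha}$), every $\sigma_s$ lies in $\mathscr{U}_\delta$, and Lemma \ref{lemH} gives $\|D\psi\circ\sigma_s\|_{1,\alpha} \lesssim \|D\psi\|_{1,\alpha} \lesssim \|\psi\|_{2,\alpha}$ uniformly in $s$. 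Combining this with the Banach-algebra property $\|fg\|_{1,\alpha} \lesssim \|f\|_{1,\alpha}\|g\|_{1,\alpha}$ of $C^{1,\alpha}$ yields \eqref{comp3}.

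The main obstacle is continuity of composition, since the natural bound from \eqref{comp3} requires the outer map to be of class $c^{2,\alpha}$, a regularity we do not have for generic elements of $\mathscr{U}_\delta$. Assuming $\xi_n \to \xi$ and $\eta_n \to \eta$ in $c^{1,\alpha}$, I would begin with the splitting
\[
\xi_n\circ\eta_n - \xi\circ\eta = (\varphi_{\xi_n}-\varphi_\xi)\circ\eta_n + (\varphi_{\eta_n}-\varphi_\eta) + \big(\varphi_\xi\circ\eta_n - \varphi_\xi\circ\eta\big),
\]
whose first two terms vanish in $c^{1,\alpha}$ by \eqref{comp4} and by hypothesis. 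For the remaining term I would use the density of smooth functions in $c^{1,\alpha}$ recalled in Section \ref{sec:prelim}: given $\varepsilon > 0$, pick $\varphi \in C^\infty\cap c^{2,\alpha}$ with $\|\varphi_\xi - \varphi\|_{1,\alpha} < \varepsilon$ and estimate
\[
\|\varphi_\xi\circ\eta_n - \varphi_\xi\circ\eta\|_{1,\alpha} \leq \|(\varphi_\xi - \varphi)\circ\eta_n\|_{1,\alpha} + \|(\varphi_\xi - \varphi)\circ\eta\|_{1,\alpha} + \|\varphi\circ\eta_n - \varphi\circ\eta\|_{1,\alpha}.
\]
The first two terms are bounded by $2C\varepsilon$ via Lemma \ref{lemH}, while the third is bounded by $C\|\varphi\|_{2,\alpha}\|\varphi_{\eta_n}-\varphi_\eta\|_{1,\alpha}$ via \eqref{comp3} and tends to zero as $n \to \infty$.

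Finally, for continuity of inversion I would reduce to the composition case via the algebraic identity
\[
\eta_n^{-1} - \eta^{-1} = -\big(\eta^{-1}\circ\eta_n - \eta^{-1}\circ\eta\big)\circ\eta_n^{-1},
\]
obtained by noting that $(\eta_n^{-1} - \eta^{-1})\circ\eta_n = e - \eta^{-1}\circ\eta_n = -(\eta^{-1}\circ\eta_n - \eta^{-1}\circ\eta)$ and then post-composing with $\eta_n^{-1}$. Since $\eta^{-1}$ is itself of the form $e + \varphi_{\eta^{-1}}$ with $\varphi_{\eta^{-1}} \in c^{1,\alpha}$ by Lemma \ref{lemH}, applying \eqref{comp1} to the outer composition with $\eta_n^{-1}$ and then the composition continuity just established to drive $\eta^{-1}\circ\eta_n \to \eta^{-1}\circ\eta$ in $c^{1,\alpha}$ completes the proof.
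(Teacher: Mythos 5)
Your proof is correct and follows essentially the same route as the paper: \eqref{comp4} is deduced from \eqref{comp1} applied to $\varphi_\xi-\varphi_\zeta$, \eqref{comp3} from the fundamental theorem of calculus together with the algebra property, and the continuity statements from the density of smooth functions combined with \eqref{comp3}. The only (cosmetic) difference is in the inversion step, where the paper writes $\xi^{-1}-\eta^{-1}=(\xi^{-1}\circ\eta-e)\circ\eta^{-1}$ and smooths $\xi^{-1}$ directly, whereas you reduce to the already-established continuity of composition via an equivalent identity; your treatment of composition continuity is in fact more explicit than the paper's, which merely asserts that it follows from \eqref{comp4} and \eqref{comp3}.
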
 
\begin{proof} 
From the estimates of Lemma \ref{lemH} we obtain as before 
\begin{align*} 
\| \xi\circ\eta - \zeta\circ\eta \|_{1,\alpha} 
&\lesssim 
\|\varphi_\xi - \varphi_\zeta\|_\infty + \|D\eta\|_\infty \|D(\varphi_\xi - \varphi_\zeta)\|_\infty 
\\ 
&+ 
[D\eta]_\alpha \|D(\varphi_\xi - \varphi_\zeta)\|_\infty 
+ 
\|D\eta\|_\infty^{1+\alpha} [D(\varphi_\xi - \varphi_\zeta)]_\alpha 
\\ 
&\lesssim 
\Big(1 + \|D\eta\|_\infty + \|D\eta\|_\infty^{1+\alpha} + [D\eta]_\alpha \Big) \| \varphi_\xi - \varphi_\zeta\|_{1,\alpha} 
\end{align*} 
which implies the estimate in \eqref{comp4}. 
On the other hand, using \eqref{comp1} and the algebra property of H\"older functions we have 
\begin{align*} 
\| \psi\circ\eta - \psi\circ\xi \|_{1,\alpha} 
\leq 
\int_0^1 \| D\psi \big( r\eta + (1-r)\xi \big) (\eta - \xi) \|_{1,\alpha} dr 
\lesssim 
\|D\psi\|_{1,\alpha} \| \eta - \xi\|_{1,\alpha} 
\end{align*} 
which gives \eqref{comp3} since $\eta - \xi = \varphi_\eta - \varphi_\xi$. 
From \eqref{comp4} and \eqref{comp3} we conclude that composition of diffeomorphisms 
in $\mathscr{U}_\delta$ is continuous with respect to $\xi$ and $\eta$. 

Finally, using the second of the inequalities in \eqref{comp1} we have 
\begin{align*} 
\| \xi^{-1} - \eta^{-1}\|_{1,\alpha} 
\lesssim 
\| \xi^{-1}\circ\eta - e \|_{1,\alpha}. 
\end{align*} 
By density, given any $\varepsilon >0$ pick a smooth $\zeta : \mathbb{R}^n \to \mathbb{R}^n$ 
such that $\| \zeta - \xi^{-1} \|_{1,\alpha} < \varepsilon$ 
and estimate the above expression further by 
\begin{align*} 
\| \xi^{-1}\circ\eta - \zeta\circ\eta \|_{1,\alpha}
+
\| \zeta\circ\eta - \zeta\circ\xi \|_{1,\alpha}
+
\| \zeta\circ\xi - \xi^{-1}\circ\xi \|_{1,\alpha}. 
\end{align*} 
The first and the third of these terms can be bounded using the first inequality in \eqref{comp1} 
by $C\varepsilon$. For the middle term we use \eqref{comp3} to bound it by 
$\| \zeta \|_{2,\alpha} \| \eta - \xi \|_{1,\alpha}$. 
\end{proof} 
%

\section{A 3D shear flow in $C^{1,\alpha}$}
\label{shear flow} 

In this section we prove Theorem \ref{LIP} by constructing a $C^{1,\alpha}$ shear flow for which 
the data-to-solution map of \eqref{E} fails to be continuous. Shear flow solutions were introduced in \cite{DM}. 
They were used recently in \cite{BT} to exhibit instantaneous loss of smoothness of the Euler equations 
in $C^\alpha$ for any $0<\alpha<1$. 

\begin{proof}[Proof of Theorem \ref{LIP}] 
Let $t\geq 0$ and consider 
$$ 
u(t,x) = \big( f(x_2), 0, h(x_1 - tf(x_2)) \big) 
\quad \text{and} \quad 
v(t,x) = \big( g(x_2), 0, h(x_1 - tg(x_2)) \big) 
$$ 
where $f$, $g$ and $h$ are bounded real-valued functions of one variable 
of class $C^{1, \alpha}$ with any $0<\alpha<1$. 
It is not difficult to verify that both $u$ and $v$ satisfy the Euler equations with initial conditions 
$$
u_0(x) = \big( f(x_2), 0, h(x_1) \big) 
\quad \text{and} \quad 
v_0(x) = \big( g(x_2), 0, h(x_1) \big). 
$$ 

Given any $\varepsilon>0$ we can arrange so that $f$ and $g$ satisfy 
\begin{equation*}
\| u_0 - v_0 \|_{1,\alpha} = \| f - g \|_{1,\alpha} < \varepsilon 
\end{equation*}
and then choose $h$ such that 
\begin{equation*} 
h'(x_1) = |x_1|^\alpha 
\qquad \quad 
\text{for all} 
\;\; 
-2a \leq x_1 \leq 2a 
\end{equation*} 
where $a=\max \big\{ \|f\|_\infty, \|g\|_\infty \big\}$ 
and assume that $f(x) \neq g(x)$ on $(-a,a)$. 

Next, we estimate the norm of the difference of the corresponding solutions. 
For any $0 < t \leq 1$ we have 
\begin{align*}
\| u(t) - v(t) \|_{1,\alpha} 
&= 
\| f - g \|_{1,\alpha} 
+ 
\big\| h( \cdot - tf(\cdot)) - h( \cdot - tg( \cdot )) \big\|_{1,\alpha} 
\\ 
&\geq 
\big\| \nabla \big( h( \cdot - tf( \cdot )) - h(\cdot - tg(\cdot)) \big) \big\|_{0,\alpha} 
\\
&= 
\big\| h'( \cdot - tf(\cdot)) - h'(\cdot - tg(\cdot)) \big\|_{0,\alpha}. 
\end{align*} 
%
It is clear that the norm on the right hand side can be bounded below by 
\begin{align*} 
\sup_{\stackrel{x \neq y}{x,y\in [-a,a]^2}}
\frac{ \Big| 
\big( |x_1 - tf(x_2)|^\alpha - |x_1 - tg(x_2)|^\alpha \big) 
- 
\big( |y_1 - tf(y_2)|^\alpha - |y_1 - tg(y_2)|^\alpha \big) 
\Big|}{ | x - y |^\alpha } 
\end{align*}
Evaluating this expression at $x_2 = y_2 = c$ with $-a < c < a$ we get a further estimate 
from below by 
\begin{equation*}
\sup_{\stackrel{x_1 \neq y_1}{x,y\in [-a,a]^2 }}
\frac{ 
|( |x_1 - tf(c)|^\alpha - |x_1 - tg(c)|^\alpha ) 
- 
( |y_1 - tf(c)|^\alpha - |y_1 - tg(c)|^\alpha )| 
}{ |x_1 - y_1|^\alpha } 
\end{equation*}
and evaluating once again at the points $x_1 = tg(c)$ and $y_1 = tf(c)$ 
we obtain a final lower bound 
\begin{equation*}
\geq 
\frac{ t^{\alpha} |g(c) - f(c)|^\alpha + t^\alpha |f(c) - g(c)|^\alpha }{ t^\alpha |f(c) - g(c)|^\alpha } 
= 2. 
\end{equation*}
Since this inequality holds for all $t$ in an open interval we conclude that the essential supremum of 
the norm 
$\|u(t)-v(t)\|_{1,\alpha}$ is bounded away from zero which proves Theorem \ref{LIP}. 
\end{proof} 
%

\section{Local well-posedness in $c^{1,\alpha}$} 
\label{Holder-wp} 

We turn to the question of well-posedness of \eqref{E} in the sense of Hadamard. 
As mentioned in the Introduction, local existence and uniqueness results in H\"older spaces 
are well known and our contribution here concerns only the continuity property of the solution map. 
To this end we will make some adjustments in the approach based on the particle-trajectory method of \cite{MB}. 
We first state Theorem \ref{LWP} more precisely as follows.   

\begin{theorem} \label{T4} 
Let $0<\alpha<1$. For any divergence free vector field $u_0 \in c^{1,\alpha}(\mathbb{R}^n)$ 
with compactly support vorticity there exist $T>0$ and a unique solution $u$ of \eqref{E} such that 
the map $u_0 \to u$ is continuous from $c^{1,\alpha}(\mathbb{R}^n)$ to $C([0,T), c^{1,\alpha}(\mathbb{R}^n))$. 
\end{theorem}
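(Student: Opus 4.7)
The plan is to recast the Cauchy problem as an autonomous ODE on the open subset $\mathscr{U}_\delta \subset e + c^{1,\alpha}(\mathbb{R}^n)$ and to apply the Picard--Lindel\"of theorem in the Banach space setting. Using conservation of vorticity along particle paths in 2D, or transport-plus-stretching in 3D, the Lagrangian equation \eqref{IDE} takes the autonomous form
\begin{equation*}
\dot\eta = F(\eta), \qquad F(\eta)(x) = \int_{\mathbb{R}^n} K_n\bigl(\eta(x) - \eta(y)\bigr) \, \Omega(\eta;y) \, dy,
\end{equation*}
where $\Omega(\eta;y) = \omega_0(y)$ when $n=2$ and $\Omega(\eta;y) = D\eta(y)\omega_0(y)$ when $n=3$. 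Compact support of $\omega_0$ removes any concern about decay of $K_n$ at infinity, and once $\omega_0 = \mathrm{curl}\, u_0$ is fixed, $F$ depends on $\eta$ alone.

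First I would establish that $F\colon \mathscr{U}_\delta \to c^{1,\alpha}(\mathbb{R}^n)$ is locally Lipschitz. Writing $F(\eta) = U(\eta)\circ\eta$, where $U(\eta)$ is the Eulerian velocity obtained from Biot--Savart applied to the pushed-forward vorticity $\Omega(\eta;\cdot)\circ\eta^{-1}$, the problem splits into two ingredients: (i) the classical Schauder-type estimate that Biot--Savart maps compactly supported $C^{0,\alpha}$ vorticities to $C^{1,\alpha}$ velocities with the expected Lipschitz dependence, and (ii) Lemmas \ref{lemH} and \ref{lemHC}, which supply the composition and inversion estimates on $\mathscr{U}_\delta$. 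Combining (i) with the chain- and pullback-estimates in (ii) yields the $c^{1,\alpha}$ bound for $F(\eta)$ together with Lipschitz dependence on $\eta$.

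Once $F$ is locally Lipschitz, Picard--Lindel\"of in $c^{1,\alpha}$ produces, for every admissible $u_0$, a unique $C^1$ curve $\eta \in C([0,T); \mathscr{U}_\delta)$ solving the Lagrangian ODE on a time interval $T>0$ uniform on bounded sets of initial data, and the map $u_0 \mapsto \eta$ is Lipschitz from $c^{1,\alpha}$ to $C([0,T); c^{1,\alpha})$. Recovering the Eulerian velocity via $u(t) = \dot\eta(t)\circ\eta(t)^{-1}$ and invoking continuity of composition and inversion from Lemma \ref{lemHC} then transfers continuous dependence from the Lagrangian to the Eulerian side, yielding the desired data-to-solution map into $C([0,T); c^{1,\alpha}(\mathbb{R}^n))$.

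The step I expect to be the main obstacle is verifying that $F(\eta)$ actually takes values in the \emph{little} H\"older space $c^{1,\alpha}$ and not merely in $C^{1,\alpha}$: the Schauder bounds used in (i) deliver only the full H\"older norm, whereas preservation of the vanishing modulus \eqref{nLH} is an additional genuine condition. My approach would be to approximate $\omega_0$ in $c^{0,\alpha}$ by smooth compactly supported $\omega_0^{(k)}$ (which is legitimate since the derivative map sends $c^{1,\alpha}$ into $c^{0,\alpha}$, where smooth functions are dense), to observe that the corresponding Biot--Savart velocities trivially lie in $c^{1,\alpha}$, and to use the uniform Lipschitz bound from (i) to pass to the limit inside the closed subspace $c^{1,\alpha}\subset C^{1,\alpha}$. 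Preservation of the vanishing modulus under the subsequent composition with $\eta\in\mathscr{U}_\delta$ is then supplied directly by Lemma \ref{lemH}.
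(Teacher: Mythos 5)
Your proposal is correct and follows essentially the same route as the paper: the Lagrangian ODE $\dot\eta=F_{u_0}(\eta)$ on $\mathscr{U}_\delta$, the factorization $F_{u_0}(\eta)=(K_n\ast\tilde\omega_0)\circ\eta$ with Calderon--Zygmund/Schauder bounds combined with Lemmas \ref{lemH} and \ref{lemHC} to get local Lipschitzness into $c^{1,\alpha}$, Picard with parameters, and the transfer to $u=\dot\eta\circ\eta^{-1}$ via continuity of composition and inversion. The only cosmetic difference is that you secure membership in the little H\"older class by density of smooth compactly supported vorticities and closedness of $c^{1,\alpha}$, whereas the paper reads the vanishing condition \eqref{nLH} directly off the explicit estimates \eqref{eq:DF2} and \eqref{eq:to}; both are valid.
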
 
\begin{proof}[Proof of Theorem \ref{T4}] 
We will concentrate on the two-dimensional case since the arguments in the three-dimensional case 
are very similar (the necessary modifications will be described below). 
We begin by constructing the Lagrangian flow as a unique solution of an ordinary differential equation 
in $\mathscr{U}_\delta$. 

Since in two dimensions the vorticity is conserved by the flow\footnote{That is, $\omega(t,\eta(t,x)) = \omega_0(x)$.} 
we can rewrite equations \eqref{IDE} in the form 
\begin{align} \label{IDE-2d} 
&\frac{d\eta}{dt}(t,x) 
= 
\int_{\mathbb{R}^2} K_2\big(\eta(t,x) - \eta(t,y) \big) \omega_0(y) \, dy 
=: F_{u_0}(\eta_t)(x), 
\\ \nonumber 
&\eta(0, x) = x 
\end{align} 
where $\omega_0=\nabla^\perp \cdot u_0$ and $K_2$ is given by \eqref{eq:KBS2}. 
In order to apply Picard's method of successive approximations it is sufficient to show that the right hand side of 
\eqref{IDE-2d} is locally Lipschitz continuous in $\mathscr{U}_\delta$. The first task is thus to establish that 
$F_{u_0}$ maps into $c^{1,\alpha}$. 

Changing variables in the integral we have 
\begin{align} \label{eq:F} 
F_{u_0}(\eta)(x) 
= 
(K_2\ast \tilde{\omega}_0) \circ \eta(x) 
\end{align} 
where $\tilde{\omega}_0 = \omega_0\circ\eta^{-1} \det{D\eta^{-1}}$ and $\eta \in \mathscr{U}_\delta$. 
Using \eqref{eq:KBS2} and \eqref{eq:jac} and estimating directly we obtain 
\begin{align} \label{eq:FF0} 
\| F_{u_0}(\eta)\|_\infty 
= 
\| K_2 \ast \tilde{\omega}_0 \|_\infty 
\leq 
C \|\omega_0\|_\infty 
\end{align} 
where $C$ depends on the size of the support of $\omega_0$. 
Next, differentiating $F_{u_0}$ in \eqref{eq:F} with respect to the $x$ variable gives 
\begin{align} \label{eq:DF} 
DF_{u_0}(\eta)(x) 
&= 
D(K_2\ast\tilde{\omega}_0)\circ\eta(x) D\eta(x) 
\\ \nonumber 
&= 
\Big( T\tilde{\omega}_0 - \frac{1}{2} \, \tilde{\omega}_0 J \Big) \circ\eta(x) D\eta(x) 
\end{align} 
where 
$\tiny J= \big( \begin{matrix} 0 & 1 \\ -1 & 0 \end{matrix} \big)$ 
is the standard $2\times 2$ symplectic matrix and $T$ is a singular integral operator of 
the Calderon-Zygmund type with the matrix kernel $DK_2(x) = \Omega(x)/|x|^2$ 
where $\Omega$ is homogeneous of degree zero 
$$ 
\quad 
Tf(x) = \frac{1}{2\pi}~ p.v. \int_{\mathbb{R}^2} \frac{\Omega(x-y)}{|x-y|^2} f(y) \, dy, 
\qquad 
\Omega(x) 
= 
|x|^{-2} \left(\begin{matrix} 2x_1x_2 & x_2^2 - x_1^2 \\ x_2^2 - x_1^2 & -2x_1x_2 \end{matrix} \right). 
$$ 
Standard estimates in H\"older spaces for such operators\footnote{See e.g., \cite{MB}, Chap. 4.} 
give 
\begin{align} \label{eq:DF1} 
&\| DF_{u_0}(\eta)\|_\infty 
\leq 
C\big( 
\|T\tilde{\omega}_0\circ\eta\|_\infty + \|\tilde{\omega}_0\circ\eta\|_\infty 
\big) \|D\eta\|_\infty 
\leq 
C\big( \|\tilde{\omega}_0\|_\infty + [\tilde{\omega}_0]_\alpha \big) 
\end{align} 
and 
\begin{align} \nonumber 
[DF_{u_0}(\eta)]_\alpha 
&\leq 
\| \big( T\tilde{\omega}_0 - 1/2\tilde{\omega}_0 J\big)\circ\eta\|_\infty [D\eta]_\alpha 
+ 
\big[ \big( T\tilde{\omega}_0 - 1/2\tilde{\omega}_0J\big)\circ\eta\big]_\alpha \|D\eta\|_\infty 
\\ \label{eq:DF2} 
&\leq 
C \big( \|\tilde{\omega}_0\|_\infty + [\tilde{\omega}_0]_\alpha \big) [D\varphi_\eta]_\alpha 
+ 
C \|D\eta\|_\infty^{1+\alpha} \big[T\tilde{\omega}_0 -1/2 \tilde{\omega}_0J \big]_\alpha 
\\ \nonumber 
&\leq 
C \big( \|\tilde{\omega}_0\|_\infty + [\tilde{\omega}_0]_\alpha \big) [D\varphi_\eta]_\alpha 
+ 
C(1+ \|D\varphi_\eta\|_\infty)^{1+\alpha} [\tilde{\omega}_0]_\alpha. 
\end{align} 
Furthermore, since from a direct computation using \eqref{eq:jac} we have 
\begin{align} \label{eq:to} 
[\tilde{\omega}_0]_\alpha 
&= 
[\omega_0\circ\eta^{-1} \det{D\eta^{-1}} ]_\alpha 
\lesssim 
[D\varphi_\eta]_\alpha \| \omega_0\|_\infty + [\omega_0]_\alpha 
\end{align} 
combining these estimates with \eqref{eq:FF0} and the fact that $\eta \in \mathscr{U}_\delta$ 
we get 
\begin{align} \label{eq:FF00} 
\|F_{u_0}(\eta)\|_{1,\alpha} 
\lesssim 
C \big( \|\omega_0\|_\infty + [\omega_0]_\alpha \big). 
\end{align} 

To show that $F_{u_0}$ maps $\mathscr{U}_\delta$ into $c^{1,\alpha}(\mathbb{R}^2)$ 
it suffices now to observe that \eqref{eq:DF2} together with \eqref{eq:to} yield 
\begin{equation}\label{small holder property}
\lim_{h\to 0} \sup_{0<|x-y|<h} 
\frac{ |DF_{u_0}(\eta)(x) - DF_{u_0}(\eta)(y)|}{|x-y|^\alpha} = 0
\end{equation}
since both $\varphi_\eta$ and $\omega_0$ are in $c^{1,\alpha}(\mathbb{R}^2)$ by assumption. 

Finally, differentiating $F_{u_0}$ in \eqref{IDE-2d} with respect to $\eta$ in the direction 
$w \in c^{1,\alpha}(\mathbb{R}^2)$ we obtain 
\begin{align} \label{deltaF-2d} 
\delta_w F_{u_0}(\eta) (x) 
&= 
\frac{d}{dr} F_{u_0}(\eta + rw) (x) \Big\vert_{r=0} 
\\ \nonumber 
&= 
\int_{\mathbb{R}^2} DK_2\big( \eta(x) - \eta(y) \big) (w(x)-w(y))  \omega_0(y) dy 
\end{align} 
which again can be bounded directly using standard H\"older estimates by 
\begin{equation} \label{Gat} 
\| \delta_w F_{u_0}(\eta) \|_{1,\alpha} 
\lesssim 
C \big( \|\omega_0\|_\infty + [\omega_0]_\alpha \big) \| w \|_{1,\alpha}.
\end{equation} 
In particular, it follows that $F_{u_0}$ has a bounded Gateaux derivative in $\mathscr{U}_\delta$ 
and hence is locally Lipschitz by the mean value theorem. 

\smallskip 
We now turn to the question of dependence of the solutions of \eqref{E} on $u_0$. 
Note that since 
$\omega_0 = \nabla^\perp \cdot u_0$ 
the initial velocity $u_0$ appears as a parameter on the right hand side of \eqref{IDE-2d}. 
Moreover, since the dependence is linear it follows that continuity (and, in fact, differentiability) 
of the map $u_0 \to F_{u_0}$ is an immediate consequence of the estimate in \eqref{eq:FF00}. 
Applying the fundamental theorem of ordinary differential equations (with parameters) for Banach spaces 
we find that there exist $T>0$ and a unique Lagrangian flow $\eta \in C([0,T), \mathscr{U}_\delta)$ 
which depends continuously (in fact, differentiably) on $u_0$. 
Using the equations in \eqref{IDE-2d} we find that the same is true of the time derivative 
$\dot{\eta} \in C([0,T),c^{1,\alpha}(\mathbb{R}^2))$. 
It follows therefore that the vector field $u=\dot{\eta}\circ\eta^{-1}$ belongs to 
$C([0,T), c^{1,\alpha}(\mathbb{R}^2)) \cap C^1([0,T), c^\alpha(\mathbb{R}^2))$ 
and a routine calculation shows that it is divergence free. 

Next, suppose that $u_0$ and $v_0$ are two divergence free vector fields in $c^{1,\alpha}(\mathbb{R}^2)$ 
and let $\eta(t)$ and $\xi(t)$ be the corresponding Lagrangian flows solving the Cauchy problem \eqref{IDE-2d} 
in $\mathscr{U}_\delta$ with initial vorticities 
$\nabla^\perp\cdot u_0$ and $\nabla^\perp\cdot v_0$ respectively. 
Given any $\varepsilon >0$ and using the fact that smooth functions are dense in $c^{1,\alpha}$ 
we can choose $\phi_\varepsilon$ in $C^\infty([0,T)\times \mathbb{R}^2)$ such that 
$$
\sup_{0\leq t \leq T} \| \phi_\varepsilon(t) - \dot{\eta}(t)\|_{1,\alpha} < \varepsilon. 
$$ 
Applying this together with \eqref{comp1} and \eqref{comp3} we estimate 
\begin{align*} 
\| u - v \|_{1,\alpha} 
&= 
\| \dot{\eta}\circ\eta^{-1} - \dot{\xi}\circ\xi^{-1} \|_{1,\alpha} 
\\ 
&\leq 
\| \dot{\eta}\circ\eta^{-1} - \phi_\varepsilon\circ\eta^{-1} \|_{1,\alpha} 
+ 
\| \phi_\varepsilon\circ\eta^{-1} - \phi_\varepsilon\circ\xi^{-1} \|_{1,\alpha} 
+ 
\| \phi_\varepsilon\circ\xi^{-1} - \dot{\xi}\circ\xi^{-1} \|_{1,\alpha} 
\\ 
&\lesssim 
\| \dot{\eta} - \phi_\varepsilon \|_{1,\alpha} 
+ 
\| \phi_\varepsilon\|_{2,\alpha} \| \eta^{-1} - \xi^{-1} \|_{1,\alpha} 
+ 
\| \phi_\varepsilon - \dot{\xi} \|_{1,\alpha}.  
\end{align*} 
The first term of the last line is clearly bounded by $\varepsilon$. 
The middle term converges to zero by Lemma \ref{lemHC} (continuity of the inversion map) 
and the fact that $\eta$ converges to $\xi$ in $c^{1,\alpha}$ whenever $u_0$ converges to $v_0$ 
since the Lagrangian flows $\eta$ and $\xi$ depend continuously on the initial velocities. 
To dispose of the last term we use \eqref{IDE-2d} and the triangle inequality 
\begin{align*} 
\| \phi_\varepsilon - \dot{\xi} \|_{1,\alpha} 
&\leq 
\| \phi_\varepsilon - \dot{\eta} \|_{1,\alpha} + \| \dot{\eta} - \dot{\xi} \|_{1,\alpha}  
\\ 
&\leq 
\varepsilon 
+ 
\| F_{u_0}(\eta) - F_{v_0}(\eta) \|_{1,\alpha} 
+ 
\| F_{v_0}(\eta) - F_{v_0}(\xi) \|_{1,\alpha}. 
\end{align*} 
Applying \eqref{eq:FF00} we obtain 
\begin{align*} 
\| F_{u_0}(\eta) - F_{v_0}(\eta) \|_{1,\alpha} 
&= 
\big\| \int_{\mathbb{R}^2} 
K_2\big( \eta(t, \cdot) - \eta(t,y) \big) \big( \nabla^\perp\cdot u_0(y) - \nabla^\perp\cdot v_0(y) \big) 
dy \big\|_{1,\alpha} 
\\ 
&\lesssim 
\| \nabla^\perp\cdot (u_0 - v_0) \|_\infty 
+ 
 \big[ \nabla^\perp\cdot (u_0 - v_0) \big]_\alpha 
 \\ 
 &\lesssim 
 \| u_0 - v_0 \|_{1,\alpha} 
\end{align*} 
and using \eqref{Gat} we get 
\begin{align*} 
\| F_{v_0}(\eta) - F_{v_0}(\xi) \|_{1,\alpha} 
&= 
\big\| \int_0^1 \frac{d}{dr} F_{v_0}\big( r\eta + (1-r)\xi \big) dr \big\|_{1,\alpha} 
\\ 
&\leq 
\int_0^1 \| \delta_{\eta - \xi} F_{v_0} \big( r\eta + (1-r)\xi \big) \|_{1,\alpha} dr 
\\ 
&\lesssim 
\| v_0\|_{1,\alpha} \| \eta - \xi \|_{1,\alpha}, 
\end{align*} 
where the latter converges to zero by continuous dependence of the flows on $u_0$ and $v_0$ as before. 
This completes the proof of Theorem \ref{T4} when $n=2$. 

\smallskip 
In the three-dimensional case the flow equations \eqref{IDE} take only a slightly more complicated form 
\begin{align} \label{IDE-3d} 
&\frac{d\eta}{dt}(t,x) 
= 
\int_{\mathbb{R}^3} K_3\big(\eta(t,x) - \eta(t,y) \big) D\eta(t,y) \omega_0(y) \, dy =: G_{u_0}(\eta_t)(x) 
\\ \nonumber 
&\eta(0, x) = x 
\end{align} 
where $\omega_0=\nabla \times u_0$, $K_3$ is given by \eqref{eq:KBS3} 
and consequently the derivative $\delta_w G_{u_0}$ 
in the direction $w\in c^{1,\alpha}(\mathbb{R}^3)$ has an extra term 
\begin{align} \label{deltaF-3d} 
\delta_w G_{u_0}(\eta) (x) 
&= 
\int_{\mathbb{R}^3} DK_3\big( \eta(x) - \eta(y) \big) (w(x)-w(y))  D\eta(y) \omega_0(y) \, dy 
\\ \nonumber 
&+ 
\int_{\mathbb{R}^3} K_3\big( \eta(x) - \eta(y) \big) Dw(y) \omega(y) \, dy. 
\end{align} 
As before, applying standard H\"olderian estimates we obtain the analogues of \eqref{eq:FF00} and \eqref{Gat} 
and the proof proceeds as in the two dimensional case. 
\end{proof} 
\begin{remark} \label{noncomp} 
Theorem \ref{T4} remains valid if the initial vorticity has noncompact support and satisfies some suitable 
decay conditions at infinity. In Section \ref{Besov} we will apply it under the assumption 
$\omega_0 \in L^1(\mathbb{R}^2)$. 
In this case we only have to replace the bound in \eqref{eq:FF0} with 
\begin{equation} \label{eq:FF0'} \tag{4.3'} 
\| F_{u_0} (\eta) \|_\infty 
\lesssim 
\|\tilde{\omega}_0\|_\infty + \| \tilde{\omega}_0\|_{L^1} 
\end{equation} 
and those in \eqref{eq:DF1}, \eqref{eq:DF2} with 
\begin{align} \label{eq:DF1'} \tag{4.5'} 
\| DF_{u_0}(\eta)\|_\infty 
&\lesssim 
\|\tilde{\omega}_0\|_{0,\alpha} + \|\tilde{\omega}_0\|_{L^1} 
\\ \label{eq:DF2'} \tag{4.6'} 
[DF_{u_0}(\eta)]_\alpha 
&\lesssim 
\big( \|\tilde{\omega}_0\|_{0,\alpha} + \|\tilde{\omega}_0\|_{L^1} \big) [D\varphi_\eta]_\alpha 
+ 
\big( 1 + \|D\varphi_\eta\|_\infty \big)^{1+\alpha} [\tilde{\omega}_0]_\alpha 
\end{align} 
and adjust the estimates \eqref{eq:FF00} and \eqref{Gat} accordingly. 
The rest of the proof remains unchanged. 
\end{remark} 
%
%
%
%
%
%

\section{Proof of Theorem \ref{LIP-Besov}: weak norm inflation in $B^2_{2,1}$} 
\label{Besov} 

Our aim in this section is to exhibit a norm inflation type mechanism which involves a family of Besov spaces. 
On the one hand the result stated in Theorem \ref{LIP-Besov} is weaker than the 
instantaneous blowup results 
obtained by Bourgain and Li. On the other hand, our method is applicable in the borderline function spaces 
that were left out of the analysis in \cite{BL, BL1}. 
The proof involves constructing a Lagrangian flow with a large deformation gradient and 
a high-frequency perturbation of the corresponding initial vorticity. It also relies on the continuity result 
for the solution map in the little H\"older space of Section \ref{Holder-wp}. 

It will be convenient to work with the vorticity equations which in two dimensions have the form 
\begin{align} \label{B2:eq:euler-v} 
&\omega_t + u{\cdot}\nabla \omega = 0, 
\qquad\qquad\qquad\quad 
t \geq 0, \; x \in \mathbb{R}^2 
\\  \nonumber
&\omega(0) = \omega_0 
\end{align} 
where $u = \nabla^\perp \Delta^{-1} \omega$ and $\omega = \partial_1 u_2 - \partial_2 u_1$. 
We first proceed to choose the initial vorticity $\omega_0$ for the Cauchy problem \eqref{B2:eq:euler-v}. 
Given any smooth radial bump function $0 \leq \phi \leq 1$ with support in the ball $B(0,1/4)$ 
let 
\begin{align} \label{B2:eq:bump} 
\phi_0(x_1, x_2) 
= 
\sum_{\varepsilon_1, \varepsilon_2 = \pm 1} 
\varepsilon_1 \varepsilon_2 \phi(x_1 {-} \varepsilon_1, x_2 {-} \varepsilon_2). 
\end{align} 
Clearly, the function $\phi_0$ is odd with respect to both $x_1$ and $x_2$. 
Given any $M \gg 1$, $r>0$ and $q>0$ define 
\begin{align} \label{B2:eq:iv} 
\omega_0(x) 
= 
\omega^{M,N,r,q}_0(x) 
= 
M^{-2} N^{-\frac{1}{q}} \sum_{0 \leq k \leq N} \phi_k(x) 
\end{align} 
where $N = 1, 2 \dots$ and 
$\phi_k(x) = 2^{( - {1} + \frac{2}{r})k} \phi_0 (2^k x)$. 
Note that the supports of $\phi_k$ are disjoint and compact with 
\begin{equation} \label{B2:eq:suppp} 
\mathrm{supp}\,{ \phi_k } 
\subset 
\bigcup_{\varepsilon_1, \varepsilon_2 = \pm 1} 
B\big( (\varepsilon_1 2^{-k}, \varepsilon_2 2^{-k}), 2^{-(k+2)} \big). 
\end{equation} 

Next, we have 
\begin{lem} \label{B2:lem:omega-0} 
If $1<q<\infty$ and $2<r<\infty$ with $q \leq r$. For any integer $N>0$ we have  
\begin{align} \label{B2:eq:omega-0} 
\|\omega_0\|_{W^{1,r}} + \| \omega_0 \|_{B^{1}_{r,q}} \lesssim  M^{-2}. 
\end{align} 
\end{lem}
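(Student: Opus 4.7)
The plan is to reduce the claim to a single estimate by pulling the prefactor $M^{-2}$ out and showing that $\Phi_N := \sum_{k=0}^N \phi_k$ satisfies $\|\Phi_N\|_{W^{1,r}} + \|\Phi_N\|_{B^1_{r,q}} \lesssim N^{1/q}$, which combined with the $N^{-1/q}$ in \eqref{B2:eq:iv} gives the result.

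For the Sobolev piece, the scaling $\phi_k(x) = 2^{(-1+2/r)k}\phi_0(2^k x)$ yields
\[
\|\phi_k\|_{L^r} = 2^{-k}\|\phi_0\|_{L^r}, \qquad \|\nabla\phi_k\|_{L^r} = \|\nabla\phi_0\|_{L^r},
\]
and the disjoint-support property \eqref{B2:eq:suppp} gives genuine $L^r$-additivity:
\[
\|\Phi_N\|_{L^r}^r = \sum_{k=0}^N 2^{-rk}\|\phi_0\|_{L^r}^r \lesssim 1, \qquad \|\nabla\Phi_N\|_{L^r}^r = (N+1)\|\nabla\phi_0\|_{L^r}^r.
\]
Hence $\|\Phi_N\|_{W^{1,r}} \lesssim (N+1)^{1/r}$, and the factor $N^{-1/q}$ absorbs this precisely because of the hypothesis $q \leq r$.

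For the Besov seminorm I would first note the exact scaling identity $\widehat{\psi_l}\ast\phi_k(x) = 2^{-k}\bigl(\widehat{\psi_{l-k}}\ast\phi_0\bigr)(2^k x)$, giving $\|\widehat{\psi_l}\ast\phi_k\|_{L^r} = 2^{-k}\,a_{l-k}$, where $a_m := \|\widehat{\psi_m}\ast\phi_0\|_{L^r}$. The key structural point is that $\phi_0$ is odd in both variables, so $\hat\phi_0(\xi) = -4\hat\phi(\xi)\sin\xi_1\sin\xi_2$ vanishes to order two at the origin; combined with the Schwartz smoothness of $\phi_0$ this yields $a_m \lesssim 2^{2m}$ as $m\to -\infty$ and $a_m$ rapidly decreasing as $m\to +\infty$. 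Consequently $b_m := 2^m a_m \in \ell^1(\mathbb{Z})$. Applying the triangle inequality in $L^r$ and then Young's convolution inequality $\ell^1 \ast \ell^q \subset \ell^q$ to $b \ast \mathbf{1}_{[0,N]}$ I obtain
\[
\|\Phi_N\|_{\dot B^1_{r,q}}^q \leq \sum_{l\in\mathbb{Z}} \Bigl(\sum_{k=0}^N 2^{l-k} a_{l-k}\Bigr)^q = \|b\ast \mathbf{1}_{[0,N]}\|_{\ell^q}^q \leq \|b\|_{\ell^1}^q (N+1),
\]
so $\|\Phi_N\|_{\dot B^1_{r,q}} \lesssim (N+1)^{1/q}$. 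Since the $L^r$ part was already handled above, this bounds $\|\Phi_N\|_{B^1_{r,q}} \lesssim N^{1/q}$ as required.

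The main technical point is the Besov estimate: unlike the $L^r$ case where the $\phi_k$'s have disjoint physical supports and everything is diagonal, in frequency space the Littlewood–Paley blocks of distinct $\phi_k$'s overlap. The whole argument hinges on turning this near-diagonality into $\ell^1$-summability on $\mathbb{Z}$ via the double moment vanishing built into \eqref{B2:eq:bump}; without that moment cancellation one would lose low-frequency summability of $b_m$ and get $\|\Phi_N\|_{\dot B^1_{r,q}} \lesssim N$ from a naive Young application, which is the wrong scaling. Once $b\in\ell^1$ is secured, the correct $N^{1/q}$ growth follows from the Young exponent pairing $\ell^1\ast\ell^q\to\ell^q$ applied to $b$ and the indicator $\mathbf{1}_{[0,N]}$.
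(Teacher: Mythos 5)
Your argument is correct, and for the main (Besov) estimate it takes a genuinely different and in fact cleaner route than the paper. The $W^{1,r}$ part is identical in substance: both proofs exploit the disjoint physical supports, the scaling $\|\phi_k\|_{L^r}=2^{-k}\|\phi_0\|_{L^r}$, $\|\nabla\phi_k\|_{L^r}=\|\nabla\phi_0\|_{L^r}$, and the hypothesis $q\le r$ to absorb the resulting $(N+1)^{1/r}$ into $N^{-1/q}$. For the seminorm $\|\cdot\|_{\dot B^1_{r,q}}$, however, the paper passes to the Fourier side via Hausdorff--Young, splits each $\hat\phi_k$ into a ``bump'' and two decaying ``tails'', and controls the tails through the auxiliary functions $\Phi^K_{k_1,k_2}$, a scaling identity, and a separate Claim; you instead stay with the Littlewood--Paley blocks directly, observe that $\|\widehat{\psi_l}\ast\phi_k\|_{L^r}=2^{-k}a_{l-k}$ with $a_m=\|\widehat{\psi_m}\ast\phi_0\|_{L^r}$, and reduce everything to Young's inequality $\ell^1\ast\ell^q\subset\ell^q$ applied to $b\ast\mathbf{1}_{[0,N]}$ with $b_m=2^m a_m$. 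This replaces the paper's bump/tail bookkeeping by a single convolution inequality and makes the source of the $N^{1/q}$ growth transparent; the paper's version, on the other hand, sets up machinery ($\Phi^K_{k_1,k_2}$ and its scaling) that is reused in spirit elsewhere in Section 5. Two small inaccuracies in your write-up, neither of which affects the result: the pointwise identity should read $\widehat{\psi_l}\ast\phi_k(x)=2^{(-1+2/r)k}\bigl(\widehat{\psi_{l-k}}\ast\phi_0\bigr)(2^kx)$, and it is only after including the Jacobian factor $2^{-2k/r}$ from the dilation that one recovers the (correct) norm identity $\|\widehat{\psi_l}\ast\phi_k\|_{L^r}=2^{-k}a_{l-k}$ that you actually use. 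Also, the closing claim that the double moment cancellation of $\phi_0$ is essential for $b\in\ell^1$ overstates matters: Young's inequality gives $a_m\le\|\widehat{\psi_0}\|_{L^1}\|\phi_0\|_{L^r}$ uniformly, so $b_m\lesssim 2^m$ is already summable as $m\to-\infty$ because of the weight $2^{sl}$ with $s=1$; the cancellation buys extra low-frequency decay that would matter at regularity $s\le 0$ but is not needed here.
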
 
\begin{proof} 
We proceed to estimate the two terms on the left hand side of \eqref{B2:eq:omega-0} separately. 
Observe that the supports of $\phi_k$ in \eqref{B2:eq:suppp} are disjoint and therefore 
changing variables we get 
$$ 
\| \omega_0\|_{L^r}^r 
\simeq 
M^{-2r} N^{-\frac{r}{q}} \sum_{k=0}^N 2^{-kr} \int_{\mathbb{R}^2} |\phi_0(x)|^r dx 
\lesssim 
M^{-2r} 
$$ 
and since $q \leq r$ we similarly have 
$$ 
\| \partial_l \omega_0\|_{L^r}^r 
\simeq 
M^{-2r} N^{-\frac{r}{q}} \sum_{k=0}^N 2^{-kr} \int_{\mathbb{R}^2} |2^k \partial_l\phi_0(x)|^r dx 
\lesssim 
M^{-2r} 
$$ 
for $l = 1, 2$, which gives the required bound for the $W^{1,r}$ term. 

The estimate of the $B^1_{r,q}$ term is slightly more cumbersome. 
It will be convenient to work with the Fourier transform of $\omega_0$. 
In this case the supports of $\hat \phi_k$ are no longer disjoint, nevertheless each $\hat \phi_k$ 
can be decomposed into a "bump" part and a decaying "tail" part where the bump parts have disjoint supports. 
From \eqref{Besov-inh} and the calculations above we only need to estimate the homogeneous Besov norm 
$\|\omega_0\|_{\dot B^1_{r,q}}$. 
We have 
\begin{equation*} 
\hat\phi_k(\xi)=2^{(-3+\frac{2}{r})k}\hat\phi_0(2^{-k}\xi) 
\end{equation*} 
and, since $\hat\phi_0$ is a function of rapid decrease, given any $\alpha>0$ we can find $K_1>1$ 
such that 
\begin{equation*} 
|\hat \phi_0(\xi)|\leq C|\xi|^{-\alpha}\quad\text{for} 
\quad 
|\xi| \geq K_1. 
\end{equation*} 
Let $\alpha>3-2/r$. In this case we have 
\begin{equation*} 
|\hat\phi_k(\xi)|\leq  2^{(-3+\frac{2}{r})k} |\hat\phi_0(2^{-k}\xi)| 
\leq 
2^{(-3+\frac{2}{r}+\alpha)k}|\xi|^{-\alpha} 
\quad \text{for} \quad |\xi| \geq 2^{k}K_1. 
\end{equation*} 
Using the Hausdorff-Young inequality we get 
\begin{eqnarray*} 
\|\omega_0\|_{\dot B^1_{r,q}}^{q} 
&\leq& 
\sum_\ell  2^{\ell q} \| \psi_\ell \, \hat{\omega}_{0} \|_{L^{r'}}^{q} 
\\ 
&\lesssim& 
M^{-2q}N^{-1}\sum_\ell 
\Big\| \psi_\ell(\xi) \, |\xi| \sum_{k= 0}^N \hat\phi_k(\xi) \Big\|_{L^{r'}_\xi}^{q}
\\ 
&\simeq& 
M^{-2q}N^{-1}\sum_\ell \left( 
\int_{\mathbb{R}^2} |\psi_\ell(\xi)|^{r'} |\xi|^{r'} \Big| \sum_{k=0}^N \hat\phi_k(\xi) \Big|^{r'} d\xi 
\right)^{q/r'} 
\end{eqnarray*} 
where $1/r + 1/r' =1$. 
Given any integers $k_1$, $k_2$ and $K$ (with $k_2 \leq K$) introduce the functions 
$$ 
\Phi_{k_1,k_2}^K(\xi) 
= 
\chi_{[2^{k_1},2^{k_1+1}]}(\xi) \, |\xi| \sum_{k=k_2}^K |\hat\phi_k (\xi)|. 
$$ 
A direct calculation yields 
\begin{eqnarray*}
\Phi^0_{j-K,-\infty}(2^{-K}\xi) 
&=& 
\chi_{[2^{j-K},2^{j-K+1}]}(2^{-K}\xi) \, \big| 2^{-K}\xi \big| \sum_{k=-\infty}^0 \big| \hat{\phi}_k (2^{-K}\xi) \big| 
\\ 
&=& 
2^{-K} \chi_{[2^j,2^{j+1}]}(\xi) \, |\xi| \sum_{k=-\infty}^0 2^{(-3+\frac{2}{r})k} \big| \hat{\phi}_0(2^{-(k+K)}\xi) \big| 
\\ 
&=&  
2^{-K} 2^{(3-\frac{2}{r})K} 
\chi_{[2^j,2^{j+1}]}(\xi) \, |\xi| \sum_{k=-\infty}^K 2^{(-3+\frac{2}{r})k} |\hat{\phi}_0(2^{-k}\xi)| 
\\ 
&=& 
2^{\frac{2K}{r'}} \Phi^K_{j,-\infty}(\xi) 
\end{eqnarray*}
%
which leads to the following scaling identity 
\begin{equation*} 
\Phi_{j,-\infty}^K(\xi) 
= 
2^{-\frac{2K}{r'}} \Phi_{j-K,-\infty}^0(2^{-K}\xi) 
\end{equation*} 
for $j>K$ and similarly we have 
\begin{equation*} 
\Phi_{j ,-\infty}^\infty(\xi) 
= 
2^{-\frac{2j}{r'}} \Phi_{0,-\infty}^\infty(2^{-j} \xi). 
\end{equation*} 
The above will be needed below in order to control the tail parts (for both high and low frequencies).  
\begin{theorem*} 
\begin{equation*} 
\sum_{j\geq 1}\|\Phi_{j,-\infty}^0\|_{L^{r'}}^q < \infty, 
\quad 
\sum_{j<1} \|\Phi_{j,0}^\infty\|_{L^{r'}}^q < \infty 
\quad \text{and} \quad 
\|\Phi_{0,-\infty}^\infty\|_{L^{r'}}^q \lesssim 1. 
\end{equation*} 
\end{theorem*} 
\begin{proof}[Proof of Claim] 
We have 
\begin{eqnarray*} 
|\Phi_{0,-\infty}^\infty(\xi)| 
&=& 
\chi_{[1,2]}(\xi) \, |\xi|\sum_{-\infty\leq k \leq \infty} |\hat\phi_k(\xi)| 
\\ 
&\lesssim& 
\chi_{[1,2]}(\xi) 
\Bigg(\sum_{-\infty\leq k \leq -\log_2 K_1} + \sum_{-\log_2 K_1\leq k \leq 1} 
+
\sum_{1\leq k \leq \infty} \Bigg) |\hat\phi_k(\xi)| 
\\ 
&\lesssim& 
\chi_{[1,2]}(\xi) 
\Bigg(\sum_{-\infty\leq k \leq -\log_2 K_1} 2^{(-3+\frac{2}{r}+\alpha)k} |\xi|^{-\alpha} 
+ \text{finite sum} + 
\sum_{1\leq k \leq \infty} 2^{(-3+\frac{2}{r})k} \Bigg) 
\\ 
&\lesssim& 
\chi_{[1,2]}(\xi) 
\end{eqnarray*} 
so that $\|\Phi_{0,-\infty}^\infty\|_{L^{r'}}^q \lesssim 1$. 

Next, since $2^kK_1<K_1$ for $k\leq 0$, we have 
\begin{eqnarray*} 
|\Phi_{j,-\infty}^0(\xi)|\lesssim 
\chi_{[2^j, 2^{j+1}]}(\xi) \, |\xi|\sum_{k=-\infty}^02^{(-3+\frac{2}{r}+\alpha)k} |\xi|^{-\alpha} 
\lesssim 
\chi_{[2^j, 2^{j+1}]}(\xi) \, |\xi|^{-\alpha+1} 
\end{eqnarray*} 
for $|\xi|>K_1$ and using this estimate we get $\sum_{j>1}\|\Phi_{j,-\infty}^0\|_{L^{r'}}^q < \infty$. 

Finally, we have 
\begin{equation*} 
|\Phi_{j,0}^\infty(\xi)| 
\lesssim 
\chi_{[2^j, 2^{j+1}]}(\xi) \, |\xi| \sum_{k=0}^\infty2^{(-3+\frac{2}{r})k} 
\lesssim 
\chi_{[2^j, 2^{j+1}]}(\xi) \, |\xi| 
\end{equation*} 
and so we obtain $\sum_{j<1}\|\Phi_{j,0}^\infty\|_{L^{r'}}^q < \infty$ as before. 
\end{proof} 

We now return to the proof of Lemma \ref{B2:lem:omega-0}. Using the fact that the supports are disjoint 
we have 
\begin{align*} 
\bigg| \sum_{k=0}^N & |\xi|\hat\phi_k(\xi)\bigg|^{r'} 
\leq 
\sum_{j}\bigg| \sum_{k=0}^N \chi_{[2^{j},2^{j+1}]}(\xi) \, |\xi|\hat\phi_k(\xi) \bigg|^{r'} 
\\ 
&\leq 
|\Phi_{1,0}^N(\xi)|^{r'}+|\Phi_{2,0}^N(\xi)|^{r'} + \cdots + |\Phi_{N,0}^N(\xi)|^{r'} 
+ 
\sum_{j>N}|\Phi_{j,0}^N(\xi)|^{r'} + \sum_{j<1}|\Phi_{j,0}^N(\xi)|^{r'} 
\\ 
&= I_1(\xi) + I_2(\xi) + I_3(\xi) 
\end{align*} 
and consequently (note that $\psi_\ell$ are also essentially disjoint) 
\begin{align*} 
\|\omega_0\|_{\dot{B}^1_{r,q}}^q 
&\lesssim 
M^{-2q} N^{-1} \sum_\ell \bigg( \int_{\mathbb{R}^2} |\psi_\ell(\xi)|^{r'} I_1(\xi) d\xi \bigg)^{q/r'} 
\\ 
&+ 
M^{-2q} N^{-1} \sum_\ell \bigg( 
\int_{\mathbb{R}^2} |\psi_\ell(\xi)|^{r'} \big( I_2(\xi) + I_3(\xi) \big) d\xi 
\bigg)^{q/r'}. 
\end{align*} 
Note that $I_1$ is a finite sum of bump parts while $I_2$ and $I_3$ correspond to the two decaying tails. 
Using the Claim and the scaling identity together with the formula for $\Phi^K_{k_1,k_2}$ we can estimate 
the first of the integrals on the right hand side of the expression above by 
\begin{align*} 
\sum_{\ell = 1}^N 
\Bigg( \int_{\mathbb{R}^2} &|\psi_\ell(\xi)|^{r'} I_1(\xi) d\xi\Bigg)^{q/r'} 
\\ 
&\leq 
\sum_{\ell = 1}^N \Bigg( 
\int|\psi_\ell(\xi)|^{r'}\bigg(|\Phi_{1,-\infty}^\infty(\xi)|^{r'} 
+ 
|\Phi_{2,-\infty}^\infty(\xi)|^{r'} + \cdots + |\Phi_{N,-\infty}^\infty(\xi)|^{r'} \bigg) 
d\xi \Bigg)^{q/r'} 
\\ 
&\lesssim 
\sum_{\ell=1}^N \bigg( 
\int|\Phi_{\ell,-\infty}^\infty(\xi)|^{r'} 
d\xi\bigg)^{q/r'} 
\lesssim 
N 
\end{align*} 
and similarly 
\begin{align*} 
\sum_{\ell<1,N<\ell} \Bigg( 
\int_{\mathbb{R}^2} &|\psi_\ell(\xi)|^{r'} \big( I_2(\xi) + I_3(\xi) \big) d\xi 
\Bigg)^{q/r'} 
\\ 
&\leq 
\sum_{\ell<1,N<\ell} \Bigg( 
\int|\psi_\ell(\xi)|^{r'} \bigg( 
\sum_{j>N}|\Phi_{j,-\infty}^N(\xi)|^{r'} + \sum_{j<1}|\Phi_{j,0}^\infty(\xi)|^{r'} \bigg) d\xi 
\Bigg)^{q/r'} 
\\ 
&\lesssim 
\sum_{\ell<1} \bigg( 
\int 
|\Phi_{\ell,0}^\infty(\xi)|^{r'} d\xi 
\bigg)^{q/r'} 
+
\sum_{N<\ell} \bigg( 
\int 
|\Phi_{\ell,-\infty}^N(\xi)|^{r'} d\xi 
\bigg)^{q/r'} 
\\ 
&\lesssim 
C, 
\end{align*} 
where $C>0$ is independent of $N$. 
Combining the above estimates we get 
\begin{equation*} 
\| \omega_0 \|_{\dot B^1_{r,q}} \lesssim M^{-2} 
\end{equation*} 
which together with the $L^r$ bound of $\omega_0$ gives the desired bound. 
\end{proof} 

In particular, since $r>2$ it follows from Lemma \ref{B2:lem:omega-0} that the associated velocity field 
$u = \nabla^\perp\Delta^{-1} \omega \in W^{2,r}$ 
has a $C^1$ smooth Lagrangian flow $\eta(t)$ obtained by solving the flow equations 
\begin{equation} \label{FLx} 
\frac{d\eta}{dt}(t,x) = u(t,\eta(t,x)), 
\qquad \eta(0,x)=x. 
\end{equation} 
Furthermore, it is not difficult to verify that $\eta(t)$ is hyperbolic with a stagnation point at the origin 
and preserves both $x_1$ and $x_2$ axes as well as the odd symmetries of $\omega_0$. 
\begin{prop} \label{B2:prop:Lag} 
Given $M \gg 1$ and $1 < q < \infty$ we have 
$$ 
\sup_{0 \leq t \leq M^{-3}} \| D\eta(t) \|_\infty > M 
$$ 
for any sufficiently large integer $N>0$ in \eqref{B2:eq:iv} and any $2<r<\infty$ sufficiently close to $2$. 
\end{prop}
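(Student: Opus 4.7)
The strategy is to exploit the odd symmetries of $\omega_0$ to produce a hyperbolic stagnation point at the origin, and then to show that the compression rate there is so large that $\|D\eta(t,0)\|$ grows past $M$ well before $t=M^{-3}$. Since $\omega_0$ is odd in each of $x_1,x_2$ and these parities are preserved by the Euler flow, the origin is a stagnation point ($\eta(t,0)\equiv 0$) and symmetry forces
\[
Du(t,0)=\operatorname{diag}\bigl(a(t),-a(t)\bigr),\qquad a(t):=\partial_1u_1(t,0).
\]
The variational equation along the trivial trajectory then gives $D\eta(t,0)=\operatorname{diag}(e^{A(t)},e^{-A(t)})$ with $A(t):=\int_0^t a(s)\,ds$, so it is enough to show $A(M^{-3})>\log M$.

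The second step is to compute a lower bound on $a(0)$ directly from Biot-Savart. Differentiating under the integral one obtains the principal-value representation
\[
a(0)=\frac{1}{\pi}\,p.v.\!\int_{\mathbb{R}^2}\frac{y_1y_2}{|y|^4}\,\omega_0(y)\,dy.
\]
The four-bump structure of $\phi_0$ with alternating signs at $(\pm1,\pm1)$ yields a strictly positive constant $c_0:=\tfrac{1}{\pi}\!\int\frac{y_1y_2}{|y|^4}\phi_0(y)\,dy>0$, and by homogeneity of the kernel the scaling $\phi_k(y)=2^{(-1+2/r)k}\phi_0(2^ky)$ gives the contribution $c_0\cdot 2^{(-1+2/r)k}$ from each $\phi_k$. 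Summing,
\[
a(0)=c_0\,M^{-2}N^{-1/q}\sum_{k=0}^{N}2^{(-1+2/r)k}.
\]
Choosing $r$ so close to $2$ that $|{-1+2/r}|\,N\lesssim 1$ (for instance $r=2+c/N$), the geometric sum is comparable to $N$, giving $a(0)\gtrsim M^{-2}N^{1-1/q}$.

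The third and main step is to propagate this lower bound to times $t\in[0,M^{-3}]$, i.e.\ to show $a(t)\ge \tfrac12 a(0)$. Transport of vorticity together with incompressibility yields
\[
a(t)=\frac{1}{\pi}\,p.v.\!\int_{\mathbb{R}^2}\frac{\eta_t(z)_1\,\eta_t(z)_2}{|\eta_t(z)|^4}\,\omega_0(z)\,dz,
\]
and the crude bound $\|u(s)\|_\infty\lesssim\|\omega_0\|_\infty+\|\omega_0\|_{L^1}\lesssim M^{-2}N^{-1/q}$ gives $\|\eta_t-e\|_\infty\lesssim M^{-5}N^{-1/q}$, which is tiny. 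The difficulty is that $\|D\eta(t)\|_\infty$ is allowed to be large (that is what we are trying to prove!), so one cannot simply perturb the singular integral in $C^1$. Instead one uses the preserved odd-odd symmetries of $\omega(t,\cdot)$ and $\eta_t$ (which force the principal value near the origin to retain its cancellation), together with the singular-integral estimates developed in \cite{MY2}, to control the discrepancy of the kernel. This is where the continuity of the solution map in $c^{1,\alpha}$ from Theorem~\ref{LWP} enters as well, since it allows smooth approximation of $\omega_0$ without loss of the lower bound on $a(t)$.

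Combining the three steps, $A(M^{-3})\ge\tfrac12 a(0)\,M^{-3}\gtrsim M^{-5}N^{1-1/q}$, which exceeds $\log M$ provided $N$ is chosen sufficiently large (depending on $M$ and $q$); hence $\|D\eta(M^{-3},0)\|\ge e^{A(M^{-3})}>M$, establishing the proposition. The principal obstacle is clearly the third step above: controlling the singular integral for $a(t)$ in a regime where the flow has already undergone appreciable deformation.
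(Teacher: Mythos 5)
Your steps 1 and 2 are sound and follow the same route as the references to which the paper defers (the paper itself omits the proof, citing \cite{MY2}, Prop.~6 and \cite{BL}, Lemma~3.2): the odd--odd symmetry makes the origin a stagnation point with $Du(t,0)=\mathrm{diag}(a(t),-a(t))$, hence $D\eta(t,0)=\mathrm{diag}(e^{A(t)},e^{-A(t)})$, and the scaling computation giving $a(0)\gtrsim c_0M^{-2}N^{1-1/q}$ for $r-2\lesssim 1/N$ is exactly right, since the kernel $y_1y_2/|y|^4$ is homogeneous of degree $-2$ and has the same sign as $\phi_0$ on each quadrant, so every bump contributes $c_02^{(-1+2/r)k}>0$.

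The gap is step 3, which you correctly flag as the principal obstacle but do not close, and the route you sketch would not close it. The displacement bound $\|\eta_t-e\|_\infty\lesssim M^{-5}N^{-1/q}$ is useless for the bumps at scales $2^{-k}\ll M^{-5}N^{-1/q}$, i.e.\ for all but $O(\log(M^5N^{1/q}))$ of the $N$ bumps: those may be displaced by far more than their distance to the origin, so no perturbative estimate on the singular integral recovers their contribution, and keeping only the bumps that provably stay put degrades the lower bound on $a(t)$ to roughly $M^{-2}N^{-1/q}\log(MN)$, which tends to $0$ as $N\to\infty$ and is not enough. The missing idea, and the one used in \cite{MY2} and \cite{BL}, is a bootstrap by contradiction: assume $\sup_{0\le t\le M^{-3}}\|D\eta(t)\|_\infty\le M$. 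Since $\det D\eta=1$ in 2D, $\eta_t$ is then (up to absolute constants) an $M$-bi-Lipschitz homeomorphism fixing the origin and preserving each coordinate axis, hence each open quadrant; therefore for $z\in\mathrm{supp}\,\omega_0$ one has the pointwise bounds $|\eta_t(z)_i|\ge |z_i|/M$ and $|\eta_t(z)|\le M|z|$, whence
\begin{equation*}
\frac{\eta_t(z)_1\,\eta_t(z)_2}{|\eta_t(z)|^4}\,\omega_0(z)\;\ge\; M^{-6}\,\frac{z_1z_2}{|z|^4}\,\omega_0(z)\;\ge\;0 .
\end{equation*}
Integrating gives $a(t)\ge M^{-6}a(0)$ for \emph{every} bump simultaneously, no matter how far the small-scale ones have travelled, so $A(M^{-3})\gtrsim M^{-11}N^{1-1/q}>\log M$ for $N$ large, contradicting the assumed bound on $\|D\eta\|_\infty$. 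Note also that your appeal to Theorem~\ref{LWP} in this step is misplaced: continuity of the solution map in $c^{1,\alpha}$ enters only later, in the perturbation argument for Theorem~\ref{LIP-Besov}, not in this proposition.
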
 
\begin{proof} 
The proof is a repetition (with obvious adjustments) of that given in \cite{MY2}; Prop. 6, 
for the special case $q=r>2$ 
;
see also \cite[Lemma 3.2]{BL} for the estimate of $R_{ii}\omega$.
It will be omitted. 
\end{proof} 
We will also need the following simple consequence of Gronwall's inequality 
(see \cite{BL}; Lemma 4.1 for example) 
\begin{lem} \label{Gron} 
If $u$ and $\tilde{u}$ are smooth divergence free vector fields on $\mathbb{R}^2$ and 
$\eta(t)$ and $\tilde{\eta}(t)$ are the corresponding solutions of \eqref{FLx} then 
$$ 
\sup_{0\leq t \leq 1}\| \eta(t) - \tilde{\eta}(t)\|_{C^1} 
\leq 
C\sup_{0\leq t \leq 1} \|u(t) - \tilde{u}(t)\|_{C^1} 
$$ 
where $C>0$ depends only on the $L^\infty$ norm of $u$ and $\tilde u$ and its derivatives. $\qquad\square$ 
\end{lem}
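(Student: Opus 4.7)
The plan is to combine two standard Gronwall estimates, one for the flows themselves and one for their Jacobians.

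First I would write the ODE satisfied by the difference of flows. From \eqref{FLx} we have
\begin{equation*}
\frac{d}{dt}\bigl(\eta(t,x) - \tilde\eta(t,x)\bigr)
= \bigl(u(t,\eta(t,x)) - \tilde u(t,\tilde\eta(t,x))\bigr),
\end{equation*}
and adding and subtracting $\tilde u(t,\eta(t,x))$ splits the right-hand side into a ``source'' term $u(t,\eta)-\tilde u(t,\eta)$, bounded by $\|u-\tilde u\|_{L^\infty}$, and a ``Lipschitz'' term $\tilde u(t,\eta)-\tilde u(t,\tilde\eta)$, bounded by $\|D\tilde u\|_{L^\infty}|\eta-\tilde\eta|$. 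A direct application of the integral form of Gronwall's inequality then yields
\begin{equation*}
\sup_{0\leq t\leq 1}\|\eta(t)-\tilde\eta(t)\|_{L^\infty}
\leq C_1 \sup_{0\leq t\leq 1}\|u(t)-\tilde u(t)\|_{L^\infty},
\end{equation*}
with $C_1$ depending only on $\|D\tilde u\|_{L^\infty}$.

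Next I would differentiate the flow equation in $x$ to obtain $\partial_t D\eta(t,x) = Du(t,\eta(t,x))\,D\eta(t,x)$ with $D\eta(0,x)=I$, and analogously for $D\tilde\eta$. Subtracting and inserting $Du(t,\eta)D\tilde\eta$ and $D\tilde u(t,\eta)D\tilde\eta$ as intermediate terms gives the decomposition
\begin{align*}
\frac{d}{dt}\bigl(D\eta - D\tilde\eta\bigr)
&= Du(t,\eta)\bigl(D\eta - D\tilde\eta\bigr)
+ \bigl(Du(t,\eta) - D\tilde u(t,\eta)\bigr)\,D\tilde\eta \\
&\quad + \bigl(D\tilde u(t,\eta) - D\tilde u(t,\tilde\eta)\bigr)\,D\tilde\eta.
\end{align*}
The first line is controlled by $\|Du\|_{L^\infty}|D\eta-D\tilde\eta|$ and $\|Du-D\tilde u\|_{L^\infty}\|D\tilde\eta\|_{L^\infty}$. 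The second line is bounded by $\|D^2\tilde u\|_{L^\infty}|\eta-\tilde\eta|\,\|D\tilde\eta\|_{L^\infty}$, which by the previous step is already controlled by $\|u-\tilde u\|_{L^\infty}$. The uniform bound on $\|D\tilde\eta(t)\|_{L^\infty}$ on $[0,1]$ is itself a straightforward consequence of Gronwall applied to $\partial_t D\tilde\eta = D\tilde u(t,\tilde\eta)D\tilde\eta$, giving an exponential bound depending only on $\|D\tilde u\|_{L^\infty}$.

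A second application of Gronwall to $|D\eta-D\tilde\eta|$ then produces
\begin{equation*}
\sup_{0\leq t\leq 1}\|D\eta(t)-D\tilde\eta(t)\|_{L^\infty}
\leq C_2 \sup_{0\leq t\leq 1}\bigl(\|u(t)-\tilde u(t)\|_{L^\infty}+\|Du(t)-D\tilde u(t)\|_{L^\infty}\bigr),
\end{equation*}
with $C_2$ depending only on the $L^\infty$ norms of $u$, $\tilde u$ and their first two derivatives. Adding this to the $L^\infty$ estimate for $\eta-\tilde\eta$ gives the claimed $C^1$ inequality. There is no real obstacle here; the only point to be careful about is that the constant depends on $\|D^2\tilde u\|_{L^\infty}$, which is permitted by the formulation ``the $L^\infty$ norm of $u$ and $\tilde u$ and its derivatives'' and by the smoothness hypothesis on the velocity fields.
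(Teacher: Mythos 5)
Your argument is correct. Note that the paper itself gives no proof of this lemma: it is stated with a reference to Bourgain--Li \cite{BL}, Lemma 4.1, and the proof is omitted. Your two-step Gronwall argument --- first controlling $\|\eta-\tilde\eta\|_{L^\infty}$ via the splitting $u(t,\eta)-\tilde u(t,\eta)$ plus $\tilde u(t,\eta)-\tilde u(t,\tilde\eta)$, then controlling $\|D\eta-D\tilde\eta\|_{L^\infty}$ using the variational equation $\partial_t D\eta = Du(t,\eta)D\eta$ and feeding the first estimate into the term involving $D^2\tilde u$ --- is exactly the standard route and fills in the omitted details correctly, including the observation that the constant must be allowed to depend on $\|D^2\tilde u\|_{L^\infty}$, which the statement's phrasing permits.
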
 
\begin{proof}[Proof of Theorem \ref{LIP-Besov}] 
Let $M_j \nearrow \infty$. Choose any $N \gg 1$ and any sequences $r_j \searrow 2$ and $q_j \searrow 1$ 
such that the estimate of Proposition \ref{B2:prop:Lag} holds for the flow $\eta_j(t)$ 
of $u_j = \nabla^\perp\Delta^{-1}\omega_j$ 
where $\omega_j$ solves the vorticity equation \eqref{B2:eq:euler-v} with initial condition 
$\omega_{0,j} = \omega_0^{M_j,N,r_j,q_j}$ given by \eqref{B2:eq:iv}. 
For each $j \geq 1$ we will introduce a high-frequency perturbation of $\omega_{0,j}^n$ 
such that for any sufficiently large $n$ we have 
\begin{equation} \label{vort} 
\|\omega^n_{0,j}\|_{B^1_{r_j,q_j}} \lesssim 1 
\quad \text{and} \quad 
\|\omega^n_j(t^*)\|_{B^1_{r_j,q_j}} \gtrsim M_j^{1/3} 
\quad \text{for some $0<t^*\leq M_j^{-3}$}. 
\end{equation} 

To this end observe that we may assume 
\begin{equation} \label{B2:eq:assump} 
\|\omega_j(t) \|_{B^{1}_{r_j,q_j}} \leq M_j^{1/3} 
\qquad 
\text{for all} \;\; 
0 \leq t \leq M_j^{-3} 
\end{equation} 
or else there is nothing to prove. 
Using Proposition \ref{B2:prop:Lag} we can pick $0 \leq t^* \leq M_j^{-3}$ 
and a point $x^\ast = (x^\ast_1, x^\ast_2)$ 
for which the absolute value of one of the entries in $D\eta_j(t_0,x^\ast)$ is at least as large as $M_j$ 
and by continuity (since $r_j >2$) deduce that in a sufficiently small $\delta$-neighbourhood of 
$x^\ast$ we have 
\begin{align} \label{B2:eq:M} 
\left| \frac{\partial \eta_j^2}{\partial x_2} (t_0,x) \right| \geq M_j 
\qquad 
\text{for all} 
\quad 
|x-x^\ast| < \delta. 
\end{align} 
To construct a sequence of perturbations of $\omega_{0,j}$ in $B^{1}_{r_j,q_j}$ pick a smooth function 
$\hat{\chi} \in C^\infty_c(\mathbb{R}^2)$ with support in the unit ball such that 
$0 \leq \hat{\chi} \leq 1$ 
and 
$\int_{\mathbb{R}^2} \hat\chi(\xi) \, d\xi = 1$ 
and set 
\begin{equation} \label{eq:bp} 
\hat{\rho}(\xi) = \hat\chi(\xi - \xi_0) + \hat\chi(\xi + \xi_0), 
\qquad 
\text{where} \;\; 
\xi \in \mathbb{R}^2 
\;\; \text{and} \;\; 
\xi_0 = (2,0). 
\end{equation} 
Observe that $\mathrm{sup}\; \hat\rho \subset B(-\xi_0,1) \cup B(\xi_0,1)$ and 
\begin{equation} \label{eq:ro2} 
\rho(0) = \int_{\mathbb{R}^2} \hat{\rho}(\xi) \, d\xi = 2. 
\end{equation} 
For any $k \in \mathbb{Z}_+$ and $\lambda >0$ define 
\begin{equation} \label{eq:beta-pert} 
\beta_j^{k,\lambda}(x) 
= 
\frac{\lambda^{-1 + \frac{2}{r_j}}}{\sqrt{k}} 
\sum_{\varepsilon_1, \varepsilon_2 = \pm 1} 
\varepsilon_1 \varepsilon_2  \rho(\lambda(x-x^\ast_\epsilon)) \sin{kx_1} 
\end{equation} 
where 
$x^\ast_\epsilon = (\varepsilon_1 x^\ast_1, \varepsilon_2 x^\ast_2)$. 
\begin{lem} \label{B2:lem:rem}
Let  $1<q_j<2<r_j<\infty$, $2 \leq p \leq \infty$ and $\sigma >0$. 
For any sufficiently large $k \in \mathbb{Z}^+$ and $\lambda >0$ we have 
\begin{enumerate} 
\item[1.] 
$
\|\beta_j^{k,\lambda}\|_{W^{1,r_j}} 
\lesssim 
\|\beta_j^{k,\lambda}\|_{B^{1}_{r_j,q_j}} 
\lesssim 
k^{\frac{1}{2}} \lambda^{-1} 
$ 
\vskip 0.05cm 
\item[2.] 
$
\|\Delta^{\frac{1 + \sigma}{2}}\partial_l\Delta^{-1}\beta_j^{k,\lambda}\|_{L^p}
\lesssim 
k^{-\frac{1}{2}} \lambda^{-1+ \frac{2}{r_j} - \frac{2}{p}} (\lambda^\sigma + k^\sigma) 
$
\vskip 0.05cm 
\item[3.] 
$ 
\|\partial_l \Delta^{-1} \beta_j^{k,\lambda}\|_{L^p} 
\lesssim 
k^{-\frac{1}{2}} \lambda^{-2 + \frac{2}{r_j} - \frac{2}{p}} 
$ 
\end{enumerate} 
where $l = 1, 2$. 
\end{lem} 
\begin{proof}[Proof of Lemma \ref{B2:lem:rem}] 
The proof of the first inequality is similar to the second inequality which in turn is similar to that of Lemma \ref{B2:lem:omega-0}.
To prove the second and third estimates it will be convenient to use the Fourier transform 
\begin{align} \label{B2:eq:FTb} 
\hat{\beta}_j^{k,\lambda}(\xi) 
= 
\frac{1}{2i} k^{-\frac{1}{2}} \lambda^{-3+\frac{2}{r_j}} 
\sum_{\varepsilon_1, \varepsilon_2 = \pm 1} \sum_{m=1}^2 
(-1)^{j+1} 
\varepsilon_1 \varepsilon_2 
\hat{\rho} \big( \lambda^{-1} \xi_m^k \big) 
e^{-2\pi i \langle x_\varepsilon^\ast, \xi_m^k \rangle} 
\end{align} 
where $\xi_m^k = \big( \xi_1 + \frac{(-1)^m}{2\pi}k, \xi_2 \big)$. 
Applying the Hausdorff-Young inequality we obtain 
\begin{align*} 
\big\| \Delta^{\frac{1 + \sigma}{2}}\partial_l \Delta^{-1} \beta_j^{k,\lambda} \big\|_{L^p} 
&\lesssim 
k^{-\frac{1}{2}} \lambda^{-1+ \frac{2}{r_j}}\sum_{m=1}^2 \left(\int_{\mathbb{R}^2}\lambda^{-2p'} 
|\xi|^{\sigma p'} \big| \hat{\rho}(\lambda^{-1}\xi_m^k) \big|^{p'} \, d\xi\right)^{1/p'} 
\end{align*} 
where $1/p + 1/p' =1$. Changing the variables we further estimate by 
\begin{align*} 
&\lesssim 
k^{-\frac{1}{2}} \lambda^{-1+\frac{2}{r_j}-2(1-\frac{1}{p'})} 
\sum_{m=1}^2 
\left( \int_{\mathbb{R}^2} 
\bigg( 
\Big( \xi_1-\frac{(-1)^m}{2\pi}k \Big)^2
+ 
\xi_2^2 \bigg)^{\frac{\sigma p'}{2}} 
\big| \hat{\rho}(\lambda^{-1}\xi) \big|^{p'} 
\frac{d\xi}{\lambda^2} \right)^{1/p'} 
\\ 
&\lesssim 
k^{-\frac{1}{2}} \lambda^{-1+\frac{2}{r_j}-\frac{2}{p}} 
\sum_{m=1}^2 
\left( \int_{\mathbb{R}^2} 
\bigg( 
\Big(\lambda\xi_1-\frac{(-1)^m}{2\pi}k \Big)^2 
+ 
(\lambda\xi_2)^2 \bigg)^{\frac{\sigma p'}{2}} \big| \hat{\rho}(\xi) \big|^{p'} 
d\xi \right)^{1/p'} 
\\ 
&\lesssim 
k^{-\frac{1}{2}} \lambda^{-1+ \frac{2}{r_j} - \frac{2}{p}} 
\big( \lambda^\sigma + k^\sigma \big). 
\end{align*} 
Similarly, we also obtain 
\begin{align*} 
\big\| \partial_l &\Delta^{-1} \beta_j^{k,\lambda} \big\|_{L^p} 
\lesssim
k^{-\frac{1}{2}} \lambda^{-2+2(\frac{1}{r_j}-\frac{1}{p})}
\end{align*} 
for any sufficiently large $k$ and $\lambda$. 
\end{proof} 

Next, set $\beta_j^n = \beta_j^{k,\lambda}$ where $k=\lambda^2$, $\lambda=3n$ and $n\gg 1$. 
Using \eqref{B2:eq:M} and \eqref{eq:ro2} we now have 
\begin{lem} \label{LLem} 
Let $M_j$, $N$, $r_j$, $q_j$, $n$ and $t^*$ be as above. Then 
\begin{enumerate} 
\item[1.] 
$ 
\| \partial_2 \beta_j^n \partial_1\eta_j^2(t^*)\|_{L^{r_j}} \lesssim Cn^{-1} 
$ 
\vskip 0.05cm 
\item[2.] 
$ 
\|\partial_1\beta_j^n \partial_2\eta_j^2(t^*)\|_{L^{r_j}} 
\gtrsim 
M_j \big(1+\mathcal{O}(n^{-\frac{1}{2}})\big) - Cn^{-1} 
$ 
\end{enumerate} 
where $C$ depends on $\|\hat{\rho}\|_{L^{r'_j}}$ and 
$\sup_{0\leq t \leq 1} \|u_j(t)\|_{C^1}$ and $1/r'_j + 1/r_j =1$. 
\end{lem} 
\begin{proof} 
The proof is analogous to that in \cite{MY2}; Lem. 11. 
\end{proof} 

For each $j\geq 1$ define a perturbation sequence of initial vorticities 
$$ 
\omega_{0,j}^n (x) = \omega_{0,j}(x) + \beta_j^n(x), 
\qquad 
n \gg 1. 
$$ 
By Lemma \ref{B2:lem:omega-0} and Lemma \ref{B2:lem:rem} (part 1) it is in $B^1_{r_j,q_j}$ 
which shows the first of the inequalities in \eqref{vort}. 
Let $\omega_j^n \in C([0,1],B^1_{r_j,q_j}(\mathbb{R}^2))$ be the solution of the vorticity equations 
with initial data $\omega_{0,j}^n$. 
Recall that $r_j>2$ and $q_j>1$ are already fixed. Given any $p\geq 2$ 
pick $0<\sigma<1+1/p-1/r_j$ in Lemma \ref{B2:lem:rem} (parts 2 and 3) so that 
$\| \nabla^\perp\Delta^{-1}(\omega_{0,j}^n - \omega_{0,j})\|_{W^{1+\sigma,p}} \to 0$ 
as $n\to \infty$.\footnote{More precisely, observe that the power of $n$ 
(recall $k = \lambda^2 \simeq n$) on the right hand side of the inequality in part 2 of Lemma \ref{B2:lem:rem} is
\begin{equation*}
2(-1 + 1/r_j - 1/p + \sigma)
< 0 
\end{equation*}
so that the $L^p$-norm there goes to zero with $n \to \infty$.
Furthermore, 
if we choose $p$ to satisfy $p > 2/\sigma$ (which is possible whenever $p>\frac{r_j}{r_j-1}$) then 
we have the embeddings 
$W^{1+\sigma,p}(\mathbb{R}^2) \subset C^{1,\sigma-\frac{2}{p}}(\mathbb{R}^2) \subset c^{1,\alpha}(\mathbb{R}^2)$ for any $0<\alpha<\sigma-2/p$.} 
Therefore, using continuity of the solution map in the little H\"older spaces of Theorem \ref{LWP}  
we find\footnote{Note that by construction $\beta_j^n \in \mathscr{S}(\mathbb{R}^2)$ has noncompact support, 
cf. Remark \ref{noncomp}.} 
\begin{equation} \label{eq:A} 
\sup_{0 \leq t \leq 1}\| \nabla^\perp\Delta^{-1} ( \omega_j^n(t) - \omega_j(t) ) \|_{C^1} 
\lesssim 
\sup_{0 \leq t \leq 1}\| \nabla^\perp\Delta^{-1} ( \omega_j^n(t) - \omega_j(t) ) \|_{1,\alpha} 
\longrightarrow 0 
\end{equation} 
as $n \to \infty$ and from Lemma \ref{Gron} we get 
\begin{align} \label{eq:LL} 
\theta_n = \sup_{0\leq t \leq 1} \| \eta_j^n(t) - \eta_j(t) \|_{C^1} 
\longrightarrow 0 
\quad 
\text{as} 
\; 
n \to \infty 
\end{align} 
where $\eta_j^n(t)$ is the flow of the velocity field $\nabla^\perp\Delta^{-1} \omega_j^n$. 
\begin{remark}
By following a well known argument of Kato and Ponce \cite{KP-Duke} we can show 
the incompressible Euler equations are locally well-posed
in the sense of Hadamard in $W^{s,p}(\mathbb{R}^2)$ with $p>2$ and $s>2+2/p$ or $p=2$ and $s>2$.
For details, see the Appendix.
We can also apply continuity of the solution map in $H^{1+\sigma}(\mathbb{R}^2)$ 
to \eqref{eq:A} directly.
However, continuity in $H^{1+\sigma}$ is currently known to be valid only in two dimensions 
due to appearance of the vortex-stretching term in the 3D case.
It therefore seems that the little H\"older spaces are the most suitable function space 
to study well-posedness in the sense of Hadamard so far. 
\end{remark}
Using \eqref{eq:LL}, conservation of vorticity and the fact that the flows are volume-preserving 
we have 
\begin{align} \nonumber  
\| \omega_j^n(t^*) \|_{B^1_{r_j,q_j}} 
&\gtrsim 
\| \nabla\omega_{0,j}^n \cdot \nabla^\perp \eta_j^{n,2}(t^*) \|_{L^{r_j}} 
\gtrsim 
\| \nabla\omega_{0,j}^n \cdot \nabla^\perp\eta_j^2(t^*) \|_{L^{r_j}} 
- 
\theta_n \|\nabla\omega_{0,j}^n\|_{L^{r_j}} 
\\ \label{FE} 
&\gtrsim 
\| \nabla\beta_j^n \cdot \nabla^\perp\eta_j^2(t^*)\|_{L^{r_j}} 
- 
\|\nabla\omega_{0,j} \cdot \nabla^\perp\eta_j^2(t^*)\|_{L^{r_j}} 
- 
\theta_n \|\nabla\omega_{0,j}^n\|_{L^{r_j}} 
\end{align} 
for any $j \geq 1$. 
Finally, observe that by \eqref{B2:eq:assump}
and the embedding $\dot B^1_{r_j,q_j}\subset \dot B^1_{r_j,2}\subset \dot W^{1,r_j}$, 
we have 
$$ 
\|\nabla\omega_{0,j} \cdot \nabla^\perp\eta_j^2(t^*)\|_{L^{r_j}} 
\lesssim 
\|\omega_j(t^*)\|_{B^1_{r_j,q_j}} 
\lesssim 
M_j^{1/3} 
$$ 
and by Lemma \ref{LLem} for any sufficiently large $n \gg 1$ we also have 
$$ 
\|\nabla\beta_j^n \cdot \nabla^\perp\eta_j^2(t^*)\|_{L^{r_j}} 
\gtrsim 
\| \partial_1\beta_j^n\partial_2\eta_j^2(t^*)\|_{L^{r_j}} 
- 
\|\partial_2\beta_j^n\partial_1\eta_j^2(t^*)\|_{L^{r_j}} 
\gtrsim 
M_j. 
$$ 
This establishes the second of the inequalities in \eqref{vort}. 
The desired sequence of velocities $\tilde u_j$ can now be obtained by selecting for each $j \geq 1$ 
a suitably large integer $n_j$ and setting $\tilde u_j =\nabla^\perp\Delta^{-1}\omega^{n_j}_j$. 
The proof of Theorem \ref{LIP-Besov} is completed. 
\end{proof} 
%
\section{Appendix: Continuity of the solution map in $W^{s,p}(\mathbb{R}^2)$} 
\label{Sobolev} 

In this section we mention the continuity of the solution map more precisely.
Continuous dependence of the solution map of the Euler equations with initial data 
in the Sobolev space 
 $W^{s,p}$ for $p\geq 2$ and $s > 2/p + 2$ 
is of course well known 
(cf. e.g., Ebin and Marsden \cite{EbMa}, Kato and Lai \cite{KL} and Kato and Ponce \cite{KP-Duke}). 

\begin{theorem} \label{wellposed} 
The incompressible Euler equations \eqref{E} are locally well-posed 
in the sense of Hadamard in 
%
\begin{enumerate} 
\item[(i)] 
the Sobolev space $W^{s,p}(\mathbb{R}^2)$ with $p\geq 2$ and $s>2/p+2$,
\item[(ii)]
the Sobolev space $H^s(\mathbb{R}^2)$ with $s>2$.
\end{enumerate} 
\end{theorem}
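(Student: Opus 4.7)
The plan is to follow the approach of Kato and Ponce \cite{KP-Duke}, treating (i) and (ii) in a unified manner since $H^s(\mathbb{R}^2)$ is the $p=2$ endpoint of case (i). There are three steps: a priori estimates, existence/uniqueness via regularization, and continuous dependence via a Bona-Smith type approximation argument.

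For the a priori estimate, I would apply $J^s = (1-\Delta)^{s/2}$ to \eqref{E}, pair in $L^p$ with $|J^s u|^{p-2} J^s u$, use $\mathrm{div}\, u = 0$, and invoke the Kato-Ponce commutator inequality
\begin{equation*}
\| [J^s, u\cdot\nabla] u \|_{L^p} \lesssim \|\nabla u\|_{L^\infty} \|u\|_{W^{s,p}}
\end{equation*}
to arrive at $\frac{d}{dt} \|u(t)\|_{W^{s,p}} \lesssim \|\nabla u(t)\|_{L^\infty} \|u(t)\|_{W^{s,p}}$. Since $s>2/p+2$ yields the Sobolev embedding $W^{s,p}(\mathbb{R}^2)\hookrightarrow W^{1,\infty}(\mathbb{R}^2)$, Gronwall produces a uniform lifespan $T=T(\|u_0\|_{W^{s,p}})>0$ together with a uniform bound on $[0,T]$. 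Existence of a solution in $L^\infty([0,T]; W^{s,p}) \cap C([0,T]; W^{s',p})$ for every $s'<s$ then follows by mollifying the initial data with a Friedrichs mollifier $J_\epsilon$, solving the regularised problem by classical theory, and passing to a limit via weak compactness and interpolation. Uniqueness comes from the standard low-regularity energy estimate, which only requires $W^{s,p}\hookrightarrow\mathrm{Lip}$.

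The main obstacle is continuous dependence without loss of derivatives, and here I would use the Bona-Smith regularisation scheme. Given $u_{0,n}\to u_0$ in $W^{s,p}$, set $u_{0,n}^\epsilon = J_\epsilon u_{0,n}$ and $u_0^\epsilon = J_\epsilon u_0$ and let $u_n^\epsilon, u^\epsilon, u_n, u$ denote the corresponding solutions. The decomposition
\begin{equation*}
\|u_n - u\|_{W^{s,p}} \leq \|u_n - u_n^\epsilon\|_{W^{s,p}} + \|u_n^\epsilon - u^\epsilon\|_{W^{s,p}} + \|u^\epsilon - u\|_{W^{s,p}}
\end{equation*}
reduces matters to two sub-estimates: a smooth-data energy bound
\begin{equation*}
\sup_{0\leq t\leq T}\|u_n^\epsilon(t) - u^\epsilon(t)\|_{W^{s,p}} \leq C(T,\epsilon)\, \|u_{0,n}^\epsilon - u_0^\epsilon\|_{W^{s,p}},
\end{equation*}
which follows from Step 1 applied to the difference, and the Bona-Smith convergence property
\begin{equation*}
\lim_{\epsilon \to 0} \sup_{n\geq 1} \sup_{0\leq t\leq T} \|u_n^\epsilon(t) - u_n(t)\|_{W^{s,p}} = 0.
\end{equation*}
Sending first $n\to \infty$ and then $\epsilon \to 0$ in the three-term decomposition yields the claimed continuity.

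The hard part is establishing the Bona-Smith property uniformly in $n$, which requires a delicate commutator estimate between $J_\epsilon$ and the transport operator $u\cdot\nabla$ at the top Sobolev level; writing $v = u_n^\epsilon - u_n$ one obtains a transport-type equation forced by $[J_\epsilon, u_n\cdot\nabla]u_n$, whose $W^{s,p}$ norm must be shown to vanish with $\epsilon$ uniformly in $n$ using only the uniform $W^{s,p}$ bound from Step 1 together with refined Kato-Ponce type commutator inequalities. This is the only genuinely nontrivial point in the argument; everything else reduces to the energy method and Gronwall's lemma.
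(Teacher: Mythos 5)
Your outline is correct and follows the same basic route as the paper: Kato--Ponce commutator energy estimates, mollification of the data, a three-term splitting, and compensation of the derivative loss in the difference estimate by the extra smoothness of the regularized solutions. The differences are worth recording. The paper works at the vorticity level: it proves a transport lemma for $\partial_t y + a\cdot\nabla y = f$ in $W^{s_*,p}$ with $s_*=s-1$, writes the equation for $\omega^j-\omega^{j'}$ (with $u_0^j=\varphi_j\ast u_0$) as such a transport equation forced by $((u^j-u^{j'})\cdot\nabla)\omega^{j'}$, and closes by pairing the low norm $\|u^j-u^{j'}\|_{s_*-1,p}$ (controlled by the data difference, with no derivative loss) against the high norm $\|\omega^{j'}\|_{s_*+1,p}$ (finite because the data were mollified). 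This vorticity formulation needs $s_*>1+2/p$ for the standard commutator, which at $p=2$ would force $s>3$; the paper therefore invokes the Fefferman--McCormick--Robinson--Rodrigo commutator estimate \eqref{Fefferman} to reach down to $s>2$ in case (ii). Your velocity-level formulation only needs $W^{s,p}\hookrightarrow W^{1,\infty}$, i.e.\ $s>1+2/p$, so it genuinely unifies (i) and (ii) and sidesteps the FMRR input --- a small but real simplification. On the other hand, you defer the only nontrivial step, the uniform Bona--Smith property $\sup_n\sup_t\|u_n^\epsilon(t)-u_n(t)\|_{W^{s,p}}\to 0$, describing it as a commutator problem for $[J_\epsilon,u_n\cdot\nabla]$ without indicating how it is resolved. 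The resolution is not a new commutator estimate but exactly the paper's compensation mechanism applied once more: estimate $v=u_n^\epsilon-u_n$ at the top level, accept the unavoidable term $\|v\|_{W^{s-1,p}}\|u_n^\epsilon\|_{W^{s+1,p}}$, and use the quantitative mollifier rates $\|J_\epsilon f - f\|_{W^{s-1,p}}=o(\epsilon)$ and $\|J_\epsilon f\|_{W^{s+1,p}}=o(\epsilon^{-1})$ (uniform over the precompact set $\{u_{0,n}\}$) so that the product vanishes as $\epsilon\to 0$ uniformly in $n$. With that step made explicit your argument is complete and equivalent in substance to the paper's.
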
 

Note that the proof of (ii) is similar to that of (i). The key is to just use the new commutator estimate in \cite{FMRR}.
By using the new commutator estimate,
 continuity of the solution map (in the 2D case) is restored in $p=2$ and $s>2$.
This is  related to  Parseval's identity.

In what follows we shall sketch the proof of continuity of the solution map of \eqref{E} 
in $W^{s,p}$ for $s>2 + 2/p$ and $p>2$.
The problem turns out to be rather subtle. 
As Kato and Lai point out in \cite{KL} the first such result for the Euler equations in the Sobolev $H^s$ setting 
was proved in \cite{EbMa} for bounded domains and integer values $s > 1 + n/2$. 
The general case of $W^{s,p}$ was settled in \cite{KP-Duke} for unbounded domains and fractional 
$s > 2 + n/p$. 
Alternative proofs were also developed in \cite{K-Q}, \cite{KL} or \cite{BdV}. 

\subsection{Proof of Theorem \ref{wellposed}\,(i)} 
The argument follows closely that given by Kato and Ponce in \cite{KP-Duke}, Sect. 2 and 3. 
Only a minor adjustment is needed to one of the lemmas in their paper (see Lemma \ref{transport} below) 
which we restate here with a proof. 

Let $D^s = (-\Delta)^{s/2}$ denote the fractional Laplacian as before. 
Recall the classical commutator estimate 
\begin{lem} \label{lem-com} 
If $s>0$ and $1<p<\infty$, then 
\begin{equation*}
\| D^s(fg)-f D^sg\|_p 
\lesssim 
\|\nabla f\|_\infty \| D^{s-1}g\|_p + \|D^s f\|_p \|g\|_\infty 
\end{equation*} 
for any $f$ and $g \in \mathcal{S}(\mathbb{R}^2)$. 
\end{lem}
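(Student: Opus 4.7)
The plan is to use Bony's paradifferential decomposition with the Littlewood--Paley blocks $\widehat{\psi}_l \ast$ from Section \ref{sec:prelim}. Writing $\Delta_j = \widehat{\psi}_j\ast$ and $S_j = \sum_{k<j}\Delta_k$, I introduce the paraproduct $T_f g = \sum_j S_{j-3}f \cdot \Delta_j g$ and the resonance $R(f,g) = \sum_{|j-k|\leq 2} \Delta_j f \cdot \Delta_k g$, which gives Bony's decomposition $fg = T_f g + T_g f + R(f,g)$. Applying the same decomposition a second time to $f \cdot D^s g$ and subtracting, I rewrite
\[
[D^s, f]g = [D^s, T_f] g - T_{D^s g} f - R(f, D^s g) + D^s T_g f + D^s R(f,g)
\]
and estimate the five summands separately.

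The heart of the matter is the first term, the paradifferential commutator $[D^s, T_f]g$. In each summand $D^s(S_{j-3}f\,\Delta_j g) - S_{j-3}f\,D^s\Delta_j g$ the factor $S_{j-3}f$ is spectrally supported in $|\xi| \lesssim 2^{j-3}$ while $\Delta_j g$ lives on the annulus $|\xi|\sim 2^j$. A Taylor expansion of the symbol $|\xi+\eta|^s$ in the small variable $\eta$ identifies the commutator with a Coifman--Meyer bilinear operator whose principal part is $s\,\nabla S_{j-3}f \cdot D^{s-2}\nabla \Delta_j g$, with lower-order remainders of the same type. The Coifman--Meyer theorem combined with the Littlewood--Paley square-function characterization of $\|D^{s-1}g\|_p$ then yields
\[
\|[D^s, T_f]g\|_p \lesssim \|\nabla f\|_\infty \, \|D^{s-1}g\|_p.
\]

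The four remaining terms are all of the form ``high-frequency block of $f$ paired with low- or comparable-frequency data'', so one can effectively place $D^s$ on $f$. For instance, Bernstein's inequality gives $\|S_{j-3}(D^s g)\|_\infty \lesssim 2^{sj}\|g\|_\infty$, and a vector-valued Littlewood--Paley estimate produces
\[
\|T_{D^s g}f\|_p \lesssim \Bigl\|\Bigl(\sum_j 2^{2sj}|\Delta_j f|^2\Bigr)^{1/2}\Bigr\|_p \|g\|_\infty \lesssim \|D^s f\|_p \|g\|_\infty.
\]
The terms $R(f,D^s g)$, $D^s T_g f$, and $D^s R(f,g)$ are controlled analogously, with convergence of the resonance sums coming from the hypothesis $s>0$.

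The main obstacle is securing the sharp factor $\|\nabla f\|_\infty$ in the principal paradifferential piece rather than the cruder $\|D^s f\|_p$. A naive bound using only $\|S_{j-3}f\|_\infty$ would lose a derivative and fail to reproduce the stated estimate; it is precisely the symbol-level Taylor expansion of $|\xi+\eta|^s$ together with the Coifman--Meyer machinery that extracts the gradient of $f$ from the low-frequency factor. The remaining dyadic bookkeeping to show that the sums converge in $L^p$ for every $1<p<\infty$ is routine once the square-function framework is in place.
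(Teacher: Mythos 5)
The paper does not actually prove this lemma: it is the classical Kato--Ponce commutator estimate and the text disposes of it with a one-line citation to \cite{KP-commutator}, Lemma X1. Your paraproduct argument is therefore a genuinely different, self-contained route, and most of it is sound: the algebra of the five-term decomposition is correct; the treatment of $[D^s,T_f]g$ by Taylor-expanding the symbol $|\xi+\eta|^s$ in the low frequency of $f$ and invoking Coifman--Meyer is exactly how one extracts the sharp factor $\|\nabla f\|_\infty\|D^{s-1}g\|_p$; and the bounds for $T_{D^s g}f$ and $D^s T_g f$ via annulus localization, Bernstein, and the square-function characterization of $\|D^s f\|_p$ are fine.

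The one genuine gap is the resonance term $R(f,D^s g)$, which you dismiss as ``controlled analogously, with convergence of the resonance sums coming from the hypothesis $s>0$.'' That summation mechanism does work for $D^s R(f,g)$: its blocks are supported in balls $|\xi|\lesssim 2^j$, and the external $D^s$ contributes $2^{sl}$ on the $l$-th output block with $l\leq j+O(1)$, so the factor $2^{s(l-j)}$ is summable precisely because $s>0$. But $R(f,D^s g)=\sum_{|j-k|\leq 2}\Delta_j f\,\Delta_k D^s g$ carries no external derivative; the blockwise bound $\|\Delta_j f\,\Delta_k D^s g\|_p\lesssim 2^{sj}\|\Delta_j f\|_p\|g\|_\infty$ followed by the triangle inequality yields only $\|g\|_\infty\|f\|_{\dot{B}^s_{p,1}}$, and $\dot{B}^s_{p,1}$ is strictly smaller than $\dot{W}^{s,p}$ for $p\neq 2$, so this does not close. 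The estimate for this term is nevertheless true, but it must be argued at the symbol level: on the resonant set $|\xi|\sim|\eta|$ one writes $|\eta|^s=|\xi|^s(|\eta|/|\xi|)^s$, so that $R(f,D^s g)$ is recognized as a Coifman--Meyer bilinear operator applied to the pair $(D^s f, g)$ and hence bounded from $L^p\times L^\infty$ to $L^p$. (Relatedly, you use the $L^\infty$-endpoint boundedness of Coifman--Meyer operators in several places without comment; it is true but not the textbook statement of the theorem, cf. \cite{BL-commutator}.) With that repair the argument is complete and recovers the cited result.
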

\begin{proof} 
See \cite{KP-commutator}; Lem. X1. 
\end{proof} 

\begin{remark} (\cite{FMRR})
If $s>1$, then
\begin{equation}\label{Fefferman}
\| D^s((f\cdot\nabla)g)-(f\cdot\nabla) D^sg\|_2 
\lesssim 
\|f\|_{H^{s+1}} \|g\|_{H^s}. 
\end{equation} 
This is an improvement of the above lemma ($p=2$).
In their proof they essentially used Parseval's identity, so, we cannot directly 
generalize it to the $p>2$ case.
\end{remark}

We have the following\footnote{Cf. \cite{KP-Duke}; Lem. 1.1.} 
\begin{lem} \label{transport} 
Assume $1<p<\infty$ and $s_*>1+2/p$. 
Let $a \in C([0,T), W^{s_* p}(\mathbb{R}^2))$ be a divergence free vector field on $\mathbb{R}^2$. 
If $y_0 \in W^{s_*,p}(\mathbb{R}^2)$ then there exists a unique solution of the Cauchy problem 
\begin{equation} \label{transport equation}
\begin{cases} 
\partial_t y + a \cdot \nabla y = 0, 
\quad x\in\mathbb{R}^2 
\\
y(0, x) = y_0(x) 
\end{cases} 
\end{equation}
such that 
\begin{equation*}
\|y(t)\|_{W^{s_*,p}} \lesssim \|y_0\|_{W^{s_*,p}} \exp \left( C\int_0^t \|a(\tau)\|_{W^{s_*,p}} d\tau \right) 
\end{equation*} 
for any $0 \leq t < T$. 
\end{lem}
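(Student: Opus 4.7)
The plan is to first derive the $W^{s_*,p}$ a priori estimate for smooth solutions and then obtain existence and uniqueness by a standard regularization scheme (for instance, smoothing $a$ and $y_0$ by mollification, applying the method of characteristics to get a smooth solution, and passing to the limit using the uniform bound).

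For the a priori estimate, I would work separately with the $L^p$ part and the homogeneous $\dot W^{s_*,p}$ part of the norm. For the $L^p$ part, multiply the transport equation by $|y|^{p-2}y$ and integrate: the divergence-free condition $\mathrm{div}\, a=0$ eliminates the transport contribution entirely by integration by parts, giving $\|y(t)\|_{L^p}=\|y_0\|_{L^p}$. For the homogeneous part, apply $D^{s_*}$ to \eqref{transport equation} to get
\begin{equation*}
\partial_t D^{s_*} y + a\cdot\nabla D^{s_*}y = -[D^{s_*},a\cdot\nabla]\,y,
\end{equation*}
then multiply by $|D^{s_*}y|^{p-2}D^{s_*}y$ and integrate. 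Using $\mathrm{div}\, a=0$ once more, the transport term on the left drops out after integration by parts, leaving
\begin{equation*}
\tfrac{1}{p}\tfrac{d}{dt}\|D^{s_*}y\|_{L^p}^p \leq \bigl\|[D^{s_*},a\cdot\nabla]\,y\bigr\|_{L^p}\,\|D^{s_*}y\|_{L^p}^{p-1}.
\end{equation*}

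To bound the commutator I would invoke Lemma \ref{lem-com} componentwise on $[D^{s_*},a_j]\partial_j y$, obtaining
\begin{equation*}
\bigl\|[D^{s_*},a\cdot\nabla]\,y\bigr\|_{L^p}
\lesssim \|\nabla a\|_{L^\infty}\|D^{s_*}y\|_{L^p} + \|D^{s_*}a\|_{L^p}\|\nabla y\|_{L^\infty}.
\end{equation*}
The hypothesis $s_*>1+2/p$ ensures the Sobolev embedding $W^{s_*-1,p}(\mathbb{R}^2)\hookrightarrow L^\infty(\mathbb{R}^2)$, so both $\|\nabla a\|_{L^\infty}$ and $\|\nabla y\|_{L^\infty}$ are controlled by the respective $W^{s_*,p}$ norms. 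Combining this with the conservation of $\|y\|_{L^p}$ yields
\begin{equation*}
\tfrac{d}{dt}\|y(t)\|_{W^{s_*,p}} \lesssim \|a(t)\|_{W^{s_*,p}}\|y(t)\|_{W^{s_*,p}},
\end{equation*}
and Gronwall's inequality gives the stated exponential bound.

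For existence and uniqueness one regularizes: let $a^\varepsilon$, $y_0^\varepsilon$ be smooth mollifications, solve the associated transport equation along characteristics (which is licit since $a^\varepsilon$ is Lipschitz), and note that the estimate above is uniform in $\varepsilon$. A weak-$\ast$ limit in $L^\infty_t W^{s_*,p}_x$ provides a solution, and uniqueness follows either from a standard $L^p$ difference estimate on two solutions sharing the transport field, or by a duality argument using the fact that $a$ is Lipschitz in $x$. The main technical obstacle is verifying that the formal integrations by parts survive at the regularity level $W^{s_*,p}$; this is where the regularization is essential, since the commutator estimate is only a priori valid for Schwartz functions, and one must justify passage to the limit of $\int|D^{s_*}y^\varepsilon|^{p-2}D^{s_*}y^\varepsilon\,a^\varepsilon\cdot\nabla D^{s_*}y^\varepsilon\,dx=0$ and of the right-hand-side commutator term using uniform bounds and strong convergence in lower-order norms.
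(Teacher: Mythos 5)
Your argument is correct and hinges on exactly the same key ingredient as the paper's proof, namely the Kato--Ponce commutator estimate of Lemma \ref{lem-com} applied to $[D^{s_*},a\cdot\nabla]y$ together with the Sobolev embedding $W^{s_*-1,p}(\mathbb{R}^2)\hookrightarrow L^\infty$, followed by Gronwall. The difference lies in how the transport term is disposed of. You use the Eulerian energy method: multiply by $|y|^{p-2}y$ (respectively $|D^{s_*}y|^{p-2}D^{s_*}y$), integrate, and invoke $\mathrm{div}\,a=0$. The paper instead works Lagrangianly: since $s_*>1+2/p$ the field $a$ generates a $C^1$ flow $\xi(t)$ of volume-preserving diffeomorphisms, so $\|y(t)\|_{L^p}=\|y_0\|_{L^p}$ by a change of variables, and the $D^{s_*}$-level estimate is obtained by evaluating
\begin{equation*}
\partial_t(D^{s_*}y)+a\cdot\nabla D^{s_*}y=-\bigl(D^{s_*}(a\cdot\nabla y)-a\cdot\nabla D^{s_*}y\bigr)
\end{equation*}
along $\xi(t)$, integrating in time, and taking $L^p$ norms, which lands on the same commutator. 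The Lagrangian route buys a cleaner justification at the stated regularity: one only needs the flow to be $C^1$ and measure-preserving, whereas your integration by parts on $\int|D^{s_*}y|^{p-2}D^{s_*}y\,(a\cdot\nabla D^{s_*}y)\,dx$ is not directly licit for $y\in W^{s_*,p}$ and forces the mollification-and-limit step you correctly flag as the main technical obstacle. Your route, on the other hand, packages existence and uniqueness into the same regularization scheme, which the paper leaves implicit. Both are standard and both are sound.
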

\begin{proof}[Proof of Lemma \ref{transport}] 
Since $s_* > 1 +2/p$ by Sobolev lemma there exists a smooth flow $\xi(t)$ of volume-preserving diffeomorphisms 
of class $C^1$ with $y(t,\xi(t,x)) = y_0(x)$ so that taking $L^p$ norms and changing variables we have 
\begin{equation} \label{tLp} 
\| y(t)\|_{L^p} = \|y_0\|_{L^p}. 
\end{equation} 
Applying $D^{s_*}$ to both sides of the transport equation \eqref{transport equation} we find 
$$ 
\partial_t (D^{s_*}y) + a\cdot\nabla D^{s_*}y = -D^{s_*}(a\cdot\nabla y) + a\cdot\nabla D^{s_*}y. 
$$ 
Evaluating this equation along $\xi(t)$, integrating it with respect to $t$ and taking $L^p$ norms as before, 
we obtain 
\begin{align} \label{tDLp} 
\|D^{s_*}y \|_{L^p} 
&\leq 
\| D^{s_*}y_0\|_{L^p} 
+ 
\int_0^t \big\| D^{s_*}(a\cdot\nabla y) - a\cdot\nabla D^{s_*}y \big\|_{L^p} d\tau 
\\ \nonumber 
&\lesssim 
\| D^{s_*}y_0\|_{L^p} 
+ 
\int_0^t \Big( \| Da\|_\infty \|D^{s_* -1}\nabla y\|_{L^p} + \|D^{s_*}a\|_{L^p} \|\nabla y\|_\infty \Big) d\tau 
\\ \nonumber 
&\lesssim 
\| D^{s_*}y_0\|_{L^p} 
+ 
\int_0^t \|a\|_{W^{s_*,p}} \|y\|_{W^{s_*,p}} d\tau 
\end{align} 
where in the second and third line we used the Kato-Ponce estimate of Lemma \ref{lem-com} 
and the Sobolev embedding theorem, respectively. 
Combining the estimates in \eqref{tLp} and \eqref{tDLp} with Gronwall's inequality 
we obtain the required estimate. 
\end{proof} 

\begin{remark}\label{external force case}
For $s_*>1+2/p$ and $p>2$, with the external force case
\begin{equation} \label{transport equation}
\begin{cases} 
\partial_t y + a \cdot \nabla y = f(t), 
\quad x\in\mathbb{R}^2 
\\
y(0, x) = y_0(x), 
\end{cases} 
\end{equation}
we have 
\begin{equation*}
\|y(t)\|_p\lesssim \|y_0\|_p+\int_0^t\|f(\tau)\|_pd\tau
\end{equation*}
and
\begin{equation*}
\|D^{s_*}y(t)\|_{p} \lesssim \|D^{s_*}y_0\|_{p} +\int_0^t \left(\|D^{s_*}a(\tau)\|_{p}\|D^{s_*}y(\tau)\|_{p}+\|D^{s_*}f(\tau)\|_p \right)d\tau. 
\end{equation*} 
for any $0 \leq t < T$. 
For $p=2$ and $s_*>1$, we can get better estimate by using Remark \ref{Fefferman}.
We have 
\begin{equation*}
\|y(t)\|_{H^{s_*}}\lesssim \|y_0\|_{H^{s_*}}+\int_0^t\left(\|a(\tau)\|_{H^{s_*}}\|y(\tau)\|_{H^{s_*}}+\|f(\tau)\|_{H^{s_*}}\right)d\tau.
\end{equation*}


\end{remark}

The rest part of   the  proof of Theorem \ref{wellposed} is essentially the same as Section 2 and Section 3 in \cite{KP-Duke}. 
The point is to estimate $\omega^j(t)-\omega^{j'}(t)$ in $W^{s-1,p}$ using a sequence of initial data
$\{u_0^j\}_{j=1}^\infty$ ($u_0^j:=\varphi_j\ast u_0$, $\varphi_j(x)=2^{2j}\varphi(2^jx)$, $\varphi\in\mathcal S$)
converging to $u_0$ in $W^{s,p}$ (corresponding initial vorticity is $\omega^j_0:= \mathrm{rot}\, u_0^j$, and its solution is $\omega^j(t):=\mathrm{rot}\, u^j(t)$.
In this case we need to set $s_*=s-1$, and need separability of the function spaces.
We apply Remark \ref{external force case} with $s_*=s-1>2/p+1$ to  
\begin{equation*}
\partial_t\left(\omega^j-\omega^{j'}\right)
=(u^j\cdot\nabla)(\omega^j-\omega^{j'})
+((u^j-u^{j'})\cdot\nabla)\omega^{j'},
\end{equation*}
then we have 
\begin{eqnarray*}
\|\omega^j(t)-\omega^{j'}(t)\|_{s_*,p}
&\leq&
\|\omega_0^j-\omega^{j'}_0\|_{s_*,p}\\
& &
+
\int_0^t\bigg(\|u^j(\tau)\|_{s_*,p}\|\omega^j(\tau)-\omega^{j'}(\tau)\|_{s_*,p}\\
& &
+\|u^j(\tau)-u^{j'}(\tau)\|_{s_*,p}\|\omega^{j'}(\tau)\|_{s_*,p}\\
& &
+\|u^j(\tau)-u^{j'}(\tau)\|_{s_*-1,p}\|\omega^{j'}(\tau)\|_{s_*+1,p}
\bigg)d\tau.
\end{eqnarray*}
On the other hand, again we apply Remark \ref{external force case} to the usual Euler equation with
$p=D^{-2}\mathrm{div}\, (u\cdot\nabla)u$ and 
\begin{equation*}
\|D^{-2+s_*}\nabla \mathrm{div}\, (u\cdot\nabla)v\|_p\leq 
\|\nabla u\|_\infty\|D^{s_*}v\|_p+\|D^{s_*}u\|_p\|\nabla v\|_\infty,
\end{equation*}
 we have 
\begin{equation*}
\|u^j(t)-u^{j'}(t)\|_{s_*-1,p}\leq \|u^j_0-u^{j'}_0\|_{s_*-1,p}\quad\text{with}\quad
\sup_{j\geq 1, t\in[0,T]}\|u^j_0\|_{s_*+1,p}\leq C
\end{equation*}
for $t\in [0,T]$.
By the density of $u^j_0$ in $W^{s_*+1,p}$, we see
\begin{equation*}
\|u^j_0-u^{j'}_0\|_{s_*-1,p}\|\omega^{j'}\|_{s_*+1,p}\to 0\quad\text{as}\quad
j,j'\to \infty.
\end{equation*} 
Combining the above calculations, we can show the continuity of the solution map in $p>2$, $2/p+2<s<2$.
The case $p=2$, $s>2$ is parallel so we omit it (just replace the commutator estimate to \eqref{Fefferman}).


\vspace{0.5cm}
\noindent
{\bf Acknowledgments.}\ 
Part of this work was done while GM was the Ulam Chair Visitor at the University of Colorado, Boulder,
and TY was an associate professor in the Department of Mathematics at  Tokyo Institute of Technology.
TY was partially supported by JSPS KAKENHI Grant Number 25870004. 
We thank Professor Yasushi Taniuchi and Professor Alexander Shnirelman for many inspiring conversations 
that led to this paper. 

\bibliographystyle{amsplain}

\end{document}